\title{On Matlis reflexive modules}
\author{Henning Krause}
\address{Fakult\"at f\"ur Mathematik\\
Universit\"at Bielefeld\\ D-33501 Bielefeld\\ Germany}
\email{hkrause@math.uni-bielefeld.de}
\theoremstyle{plain}
\newtheorem{thm}{Theorem}[section]
\newtheorem{prop}[thm]{Proposition}
\newtheorem{lem}[thm]{Lemma} 
\newtheorem{cor}[thm]{Corollary}
\theoremstyle{definition}
\newtheorem{exm}[thm]{Example}
\theoremstyle{remark}
\newtheorem{rem}[thm]{Remark}
\numberwithin{equation}{thm}
\newcommand{\add}{\operatorname{add}}
\newcommand{\arno}{\operatorname{arno}}
\newcommand{\art}{\operatorname{art}}
\newcommand{\Coker}{\operatorname{Coker}}
\newcommand{\End}{\operatorname{End}}
\newcommand{\fl}{\operatorname{fl}}
\newcommand{\Hom}{\operatorname{Hom}}
\renewcommand{\Im}{\operatorname{Im}}
\newcommand{\Ker}{\operatorname{Ker}}
\renewcommand{\mod}{\operatorname{mod}}
\newcommand{\noeth}{\operatorname{noeth}}
\newcommand{\refl}{\operatorname{refl}}
\newcommand{\soc}{\operatorname{soc}}
\newcommand{\Spec}{\operatorname{Spec}}
\newcommand{\iso}{\xrightarrow{\raisebox{-.4ex}[0ex][0ex]{$\scriptstyle{\sim}$}}}
\newcommand{\longiso}{\xrightarrow{\ \raisebox{-.4ex}[0ex][0ex]{$\scriptstyle{\sim}$}\ }}
\newcommand{\lto}{\longrightarrow}
\newcommand{\smatrix}[1]{\left[\begin{smallmatrix}#1\end{smallmatrix}\right]}
\newcommand{\xto}{\xrightarrow}
\newcommand*{\intref}[2]{\def\tmp{#1}\ifx\tmp\empty\hyperref[#2]{\ref*{#2}}\else\hyperref[#2]{#1~\ref*{#2}}\fi}
\def\A{\mathcal A} 
\def\C{\mathcal C}
\def\D{\mathcal D} 
\def\F{\mathcal F}
\def\T{\mathcal T}
\def\bbN{\mathbb N}
\def\bbZ{\mathbb Z}
\newcommand{\fra}{\mathfrak{a}}
\newcommand{\frm}{\mathfrak{m}} 
\newcommand{\frn}{\mathfrak{n}}
\def\a{\alpha}
\def\p{\phi}
\def\La{\Lambda}
\def\Om{\Omega}
\begin{document}

\keywords{Krull--Schmidt property, Matlis duality, Matlis reflexive
  module, pure-injective module}

\subjclass[2020]{13C60 (primary), 13E05, 16D70 (secondary)}

\begin{abstract}
  Matlis duality for modules over commutative rings gives rise to the
  notion of Matlis reflexivity. It is shown that Matlis reflexive
  modules form a Krull--Schmidt category. For noetherian rings the
  absence of infinite direct sums is a characteristic feature of
  Matlis reflexivity. This leads to a discussion of objects that are
  extensions of artinian by noetherian objects. 
  Classifications of Matlis reflexive modules are provided for some
  small examples.
\end{abstract}

\date{\today.}

\maketitle

\section{Introduction}

Matlis duality is one of the corner stones of commutative algebra. It
is also used in noncommutative algebra, because it provides for any
ring a relation between left and right modules. In this note we
revisit this classical theory which is based upon the seminal work of
Matlis \cite{Ma1958}. Of particular interest are Matlis reflexive
modules and we show that they form a Krull--Schmidt category. This
class of modules admits nice descriptions when the ring is noetherian
local and complete. For instance, they are precisely the modules that
are extensions of artinian by noetherian modules \cite{En1984,Zi1974,
  Zo1983}.  Another characteristic feature of Matlis reflexivity is
the absence of infinite direct sums. For any abelian category this
leads us to an analysis of objects that are extensions of artinian by
noetherian objects. The study of this finiteness condition goes back
to Baer who called abelian groups with this property \emph{minimax}
\cite{Ba1953}.

Let $A$ be a commutative ring and let $\Omega$ denote the set of
maximal ideals of $A$. We write
\[E\coloneqq E\left(\coprod_{\frm\in\Omega}A/\frm\right)\] for the
injective envelope of a coproduct of a representative set of simple
$A$-modules; it is a minimal injective cogenerator for the category of
$A$-modules. \emph{Matlis duality} for $A$-modules is given by the assignment
\[M\longmapsto M^\vee\coloneqq\Hom_A(M,E).\] For any pair of $A$-modules $M,N$
there is an isomorphism
\[\Hom_A(M,N^\vee)\cong \Hom_A(N,M^\vee)\]
and the evaluation map $M\to (M^\vee)^\vee$ is the unit of the adjunction.
The module $M$ is \emph{Matlis reflexive} if the evaluation map is an isomorphism. Examples
of Matlis reflexive modules are the modules of finite length.

\section{The Krull--Schmidt property}

Every finite length module admits an essentially unique decomposition
into indecomposables.  This generalises to Matlis reflexive modules.

\begin{thm}\label{th:KS}
  The Matlis reflexive modules form a Serre subcategory which is
  Krull--Schmidt. Thus for any exact sequence
  $0\to M'\to M\to M''\to 0$ the module $M$ is reflexive if and only
  $M'$ and $M''$ are reflexive. Moreover, any reflexive module admits
  an essentially unique decomposition into finitely many
  indecomposable modules with local endomorphism rings.
\end{thm}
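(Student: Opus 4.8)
The plan is to establish the three assertions in sequence. For the Serre property, note that $(-)^\vee=\Hom_A(-,E)$ is exact since $E$ is injective, and that the evaluation map $\varepsilon_M\colon M\to M^{\vee\vee}$ is a monomorphism for every $M$ because $E$ is a cogenerator. Given a short exact sequence $0\to M'\to M\to M''\to 0$, dualising twice produces a commutative diagram with exact rows and vertical maps $\varepsilon_{M'},\varepsilon_M,\varepsilon_{M''}$. If $M'$ and $M''$ are reflexive, the five lemma gives that $\varepsilon_M$ is an isomorphism. Conversely, if $\varepsilon_M$ is an isomorphism, a short diagram chase using that all three evaluation maps are mono shows first that $\varepsilon_{M'}$ is surjective, hence an isomorphism, and then (again by the five lemma) that $\varepsilon_{M''}$ is an isomorphism. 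Thus $\refl A$ is a Serre subcategory; it is also closed under finite direct sums and direct summands, since $\varepsilon_{M_1\oplus M_2}=\varepsilon_{M_1}\oplus\varepsilon_{M_2}$, so it is an idempotent complete additive category, and $(-)^\vee$ restricts to a duality on it.

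For the Krull--Schmidt property I would extract two properties of a reflexive module $M$. First, $M$ is \emph{pure-injective}. Since $\Hom_\bbZ(A,\bbQ/\bbZ)$ is an injective cogenerator of $\Mod A$ and $E$ is injective, $E$ is a direct summand of a power of $\Hom_\bbZ(A,\bbQ/\bbZ)$; consequently $M^\vee=\Hom_A(M,E)$ is a direct summand of a power of the character module $\Hom_\bbZ(M,\bbQ/\bbZ)$, which is pure-injective, and therefore so are $M^\vee$ and $M\cong M^{\vee\vee}$. Second, $M$ contains no infinite direct sum of nonzero submodules. By the Serre property any such coproduct would itself be reflexive, so it is enough to check that $X=\coprod_{i\in I}L_i$ with $I$ infinite and all $L_i\neq 0$ is never reflexive: here $X^\vee=\prod_iL_i^\vee$, the module $C=\prod_iL_i^\vee/\coprod_iL_i^\vee$ is nonzero, and $C^\vee$ is a nonzero submodule of $X^{\vee\vee}$ which maps to $0$ in every $L_i^{\vee\vee}$; but the composite $X\xrightarrow{\varepsilon_X}X^{\vee\vee}\to L_i^{\vee\vee}$ equals $\varepsilon_{L_i}$ composed with the projection $X\to L_i$, so its kernel is $\coprod_{j\neq i}L_j$, and $\bigcap_i\coprod_{j\neq i}L_j=0$ forces $C^\vee=0$, a contradiction.

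Now combine the two. A pure-injective module is either a finite direct sum of indecomposable pure-injective modules or it contains an infinite direct sum of nonzero submodules --- in the structure theory of pure-injective modules a superdecomposable direct summand and an infinite family of indecomposable summands each give rise to such an infinite coproduct. Hence a reflexive module $M$ decomposes as $M=\bigoplus_{i=1}^nM_i$ with each $M_i$ indecomposable pure-injective, and the endomorphism ring of an indecomposable pure-injective module is local, so each $\End_A(M_i)$ is local. Essential uniqueness of this decomposition is the Krull--Remak--Schmidt--Azumaya theorem. This shows that $\refl A$ is Krull--Schmidt.

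The conceptual point I expect to carry the argument is the recognition that Matlis reflexivity forces both pure-injectivity and finiteness of the Goldie dimension; granting the (classical) decomposition theory of pure-injective modules, the remaining delicate step is the verification that an infinite coproduct of nonzero modules cannot be reflexive, which is precisely where the cogenerator property of $E$ enters.
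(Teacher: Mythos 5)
Your proposal is correct and follows essentially the same route as the paper: exactness of $(-)^\vee$ plus injectivity of the evaluation map for the Serre property, pure-injectivity of reflexive modules, the impossibility of an infinite direct sum inside a reflexive module, and then the discrete/continuous decomposition of pure-injectives together with Azumaya. Your argument that an infinite coproduct is never reflexive (via the dual of the cokernel of $\bigoplus L_i^\vee\to\prod L_i^\vee$) is just the kernel-side version of the paper's computation, and your character-module derivation of pure-injectivity is a proof of what the paper cites.
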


The proof uses an intrinsic description of Matlis dual modules.

\begin{lem}\label{le:pure-inj}
An $A$-module is pure-injective if and only if it is a direct summand
of a Matlis dual module.
\end{lem}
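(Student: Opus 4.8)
The plan is to exploit the characterisation of pure-injective modules as those modules $M$ for which the canonical embedding $M \to M^{\vee\vee}$ (or more precisely, some embedding into a ``double dual''-type module) splits. Concretely, recall that over any ring, a module $M$ is pure-injective if and only if the evaluation map $M \to \Hom_{\bbZ}(\Hom_{\bbZ}(M,\bbQ/\bbZ),\bbQ/\bbZ)$ is a split monomorphism, because $\Hom_{\bbZ}(-,\bbQ/\bbZ)$ of any module is pure-injective and the pure embedding $M \hookrightarrow M^{\vee\vee}$ splits precisely when $M$ is pure-injective. So one direction amounts to showing that Matlis dual modules $N^\vee = \Hom_A(N,E)$ are pure-injective; the other amounts to showing every pure-injective $A$-module is a summand of such a dual.

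For the ``if'' direction, I would first show that $E$ itself is pure-injective: it is injective, hence pure-injective. More generally $N^\vee = \Hom_A(N,E)$ is pure-injective for every $A$-module $N$. The cleanest route is to observe that a pure-exact sequence $0 \to X \to Y \to Z \to 0$ of $A$-modules remains exact after applying $\Hom_A(N,-)$ when the modules are replaced appropriately — rather, use the standard adjunction-style argument: a functor of the form $\Hom_A(N,I)$ with $I$ injective sends pure monomorphisms to split monomorphisms. Indeed, if $u\colon X\to Y$ is a pure monomorphism then $\Hom_A(Y,I)\to\Hom_A(X,I)$ is surjective (purity plus injectivity of $I$), and by adjunction any map $X\to\Hom_A(N,I)$ corresponds to a map $N\otimes_A X\to I$, which extends along $N\otimes_A u$ since $N\otimes_A u$ is again a pure monomorphism and $I$ is injective. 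This shows $\Hom_A(N,E)$ is pure-injective, and hence so is any direct summand of it.

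For the ``only if'' direction, let $M$ be pure-injective. The key is that the evaluation map $\eta_M\colon M \to M^{\vee\vee}$ is a pure monomorphism: this is a general fact, since $M \to \Hom_{\bbZ}(\Hom_{\bbZ}(M,\bbQ/\bbZ),\bbQ/\bbZ)$ is pure and $E$ cogenerates, so one reduces the statement about $\eta_M$ to the classical $\bbQ/\bbZ$ case (or argues directly that $\Coker\eta_M$ is flat-testing-trivial). Since $M$ is pure-injective, every pure monomorphism out of $M$ splits, so $\eta_M$ is a split monomorphism and $M$ is a direct summand of $M^{\vee\vee} = (M^\vee)^\vee$, which is a Matlis dual module.

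The main obstacle I anticipate is getting the purity of the evaluation map $M\to M^{\vee\vee}$ cleanly for an arbitrary commutative ring $A$, since $E$ is only a cogenerator (not a faithfully flat-related object), so one cannot argue quite as directly as over $\bbZ$ with $\bbQ/\bbZ$. I would handle this by factoring through the $\bbZ$-dual: there is a natural map $\Hom_A(M,E)\to\Hom_\bbZ(M,\bbQ/\bbZ)$ and compatibility of evaluation maps, together with the fact that $E$ contains (a copy built from) all the residue fields, letting one detect purity. Alternatively — and this may be the slicker fix — one shows directly that for a pure monomorphism to be tested it suffices to check that $\Hom_A(N, -)$ applied to $\eta_M$ is surjective for all finitely presented $N$, which follows from injectivity of $E$ once one knows $\eta_M\otimes_A N$ is a monomorphism, and the latter holds because $E$ is an injective cogenerator so $(-)^\vee$ is faithfully exact on the relevant level. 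Either way the statement is forced, and the lemma follows.
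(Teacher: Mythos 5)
Your proposal is correct and follows essentially the same route as the paper: Matlis dual modules are pure-injective (you supply the standard adjunction argument via $\Hom_A(N\otimes_A X,E)\cong\Hom_A(X,\Hom_A(N,E))$, where the paper simply cites references), and the evaluation map $M\to (M^\vee)^\vee$ is a pure monomorphism, hence splits when $M$ is pure-injective. The only place you hedge is the purity of the evaluation map, which the paper also asserts without proof; your second suggested fix is the right one, and the missing ingredient is just the natural isomorphism $N\otimes_A X^\vee\cong\Hom_A(\Hom_A(N,X),E)$ for finitely presented $N$, which identifies $N\otimes_A\eta_M$ with the (injective) evaluation map $N\otimes_A M\to (N\otimes_A M)^{\vee\vee}$.
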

\begin{proof}
  It its well known that a Matlis dual module is pure-injective; see
  \cite[Proposition~I.10.1]{Au1978} or
  \cite[Proposition~12.4.7]{Kr2022}. For the converse note that the
  canonical map $M\to (M^\vee)^\vee$ is a pure monomorphism, so it
  splits when $M$ is pure-injective.
\end{proof}  
  
\begin{proof}[Proof of Theorem~\ref{th:KS}]
  The first statement follows from an application of the Snake Lemma,
  since Matlis duality is an exact functor and the canonical map
  $M\to (M^\vee)^\vee$ is a monomorphism. For the second assertion we
  use the fact that a reflexive module is pure-injective, by
  Lemma~\ref{le:pure-inj}. A pure-injective module $M$ admits a
  decomposition $M=M'\oplus M''$ such that $M'$ is \emph{discrete}, so
  a pure-injective envelope of a direct sum $\bigoplus_{i\in I}M_i$ of
  indecomposable direct summands of $M$, and $M''$ is
  \emph{continuous}, so having no indecomposable direct summand
  \cite[Theorem~8.25]{JL1989}. From Lemma~\ref{le:Matlis} below it
  follows that $M''=0$ and that the set $I$ is finite, so
  $M=\bigoplus_{i\in I}M_i$. This yields the Krull--Schmidt property
  since each indecomposable pure-injective module has a local
  endomorphism ring.
\end{proof}

\begin{lem}
\label{le:Matlis}
Let $M=\bigoplus_{i\in I}M_i$ be a Matlis reflexive module. Then
$M_i=0$ for almost all $i$.  Moreover, any non-zero Matlis reflexive
module has an indecomposable direct summand.
\end{lem}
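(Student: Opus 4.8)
The key tool is the behaviour of Matlis duality with respect to direct sums and products. The plan is to exploit the identity $\left(\bigoplus_{i\in I}M_i\right)^\vee\cong\prod_{i\in I}M_i^\vee$, which holds because $\Hom_A(-,E)$ sends coproducts to products. If $M=\bigoplus_{i\in I}M_i$ is reflexive, then the evaluation map identifies $M$ with $M^{\vee\vee}\cong\left(\prod_{i\in I}M_i^\vee\right)^\vee$. Now I would chase the evaluation unit through this identification: the canonical inclusion $\bigoplus_i M_i^\vee\hookrightarrow\prod_i M_i^\vee$ dualises to a split surjection $\left(\prod_i M_i^\vee\right)^\vee\twoheadrightarrow\left(\bigoplus_i M_i^\vee\right)^\vee\cong\prod_i M_i^{\vee\vee}$, and the composite $M\to M^{\vee\vee}\to\prod_i M_i^{\vee\vee}$ is, up to the reflexivity isomorphisms, just the canonical map $\bigoplus_i M_i\to\prod_i M_i$. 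Since the first map is an isomorphism and the second is split epi, the canonical map $\bigoplus_i M_i\to\prod_i M_i$ is a split epimorphism; but a split epimorphism from a coproduct to the corresponding product forces all but finitely many $M_i$ to vanish (the splitting would otherwise produce an element of the product not supported on any finite subset yet lying in the image of the coproduct). This gives the first assertion.

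For the second assertion, suppose $M\neq 0$ is reflexive with no indecomposable direct summand. By Lemma~\ref{le:pure-inj} the module $M$ is pure-injective, hence by \cite[Theorem~8.25]{JL1989} it decomposes as $M=M'\oplus M''$ with $M'$ the pure-injective envelope of a direct sum of indecomposables and $M''$ superdecomposable (continuous). The hypothesis forces $M'=0$, so $M=M''$ is a nonzero superdecomposable pure-injective module. The plan is to derive a contradiction from the absence of infinite direct sums established in the first part. A nonzero superdecomposable module contains an infinite family of nonzero submodules whose sum is direct; more precisely, one can split off successively to obtain $M''\cong N_1\oplus N_1'$, then $N_1'\cong N_2\oplus N_2'$, and so on with each $N_k\neq 0$, yielding a direct summand $\bigoplus_{k\geq 1}N_k$ of $M''$ with all $N_k\neq 0$. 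Since a direct summand of a reflexive module is reflexive (the category is Serre by Theorem~\ref{th:KS}, or directly: a summand of a reflexive module is reflexive), this contradicts the first assertion. Hence $M''=0$, so $M=M'\neq 0$ has an indecomposable direct summand.

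The main obstacle is the careful bookkeeping in the first part: one must verify that the evaluation unit $M\to M^{\vee\vee}$ really does correspond, under the canonical isomorphisms $M^\vee\cong\prod_i M_i^\vee$ and $\left(\bigoplus_i M_i^\vee\right)^\vee\cong\prod_i M_i^{\vee\vee}$, to the product of the evaluation units composed with the canonical map $\bigoplus_i M_i\to\prod_i M_i$. This is a diagram chase with the adjunction $\Hom_A(X,Y^\vee)\cong\Hom_A(Y,X^\vee)$ and the naturality of the coproduct-to-product comparison; it is routine but needs to be spelled out to conclude that split epimorphism cleanly. A cleaner alternative I would consider is to bypass the unit entirely: observe that $\bigoplus_i M_i$ reflexive implies $\prod_i M_i^\vee$ reflexive (as its dual is $M^{\vee\vee}\cong M$, and applying $\vee$ again returns $\prod_i M_i^\vee$ once one knows $M^\vee$ is reflexive whenever $M$ is — which follows from Theorem~\ref{th:KS} applied to the split mono $M\hookrightarrow M^{\vee\vee}$), and then note that a product $\prod_i N_i$ of nonzero modules over any ring cannot be isomorphic via the canonical map to $\bigoplus_i N_i$ unless $I$ is finite; the reflexivity of $\prod_i M_i^\vee$ together with $\left(\prod_i M_i^\vee\right)^\vee\cong\prod_i M_i^{\vee\vee}$ being a product while its reflexive predecessor $\bigoplus_i M_i$ is a coproduct forces finiteness. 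Either way the crux is the same cardinality phenomenon: reflexivity is incompatible with genuinely infinite coproducts because dualising turns them into products.
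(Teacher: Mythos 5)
Your argument for the first assertion is essentially the paper's own: dualise the inclusion $\bigoplus_i M_i^\vee\to\prod_i M_i^\vee$ to get an epimorphism $(M^\vee)^\vee\to\prod_i(M_i^\vee)^\vee$, identify the composite with the canonical map $\bigoplus_i M_i\to\prod_i M_i$, and conclude finiteness from surjectivity. (You do not need the epimorphism to be split, and your parenthetical justification in fact only uses surjectivity; the ``split'' claim is true because the inclusion of a coproduct into a product is a pure monomorphism, but it is superfluous.)

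For the second assertion your core argument is again the paper's --- iterate the splitting $M=N_1\oplus N_1'$, $N_1'=N_2\oplus N_2'$, \dots\ to embed an infinite direct sum of non-zero modules into $M$ and contradict the first part --- but you reach it via an unnecessary detour: you first invoke pure-injectivity and the discrete/continuous decomposition of \cite[Theorem~8.25]{JL1989} to reduce to a superdecomposable module, and ``superdecomposable'' is exactly the hypothesis ``no indecomposable direct summand'' you started from, so the reduction buys nothing. (The paper reserves the pure-injectivity machinery for the proof of Theorem~\ref{th:KS}, where this lemma is then applied.) One small overstatement: $\bigoplus_{k\ge 1}N_k$ need not be a direct summand of $M$ --- only each finite partial sum is --- but it is a submodule, and since you invoke closure of reflexive modules under submodules (the Serre property, whose proof does not depend on this lemma, so there is no circularity) the argument goes through unchanged.
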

\begin{proof}
  We have an isomorphism $M^\vee\cong\prod_{i\in I}M_i^\vee$, and
  applying the duality to the canonical inclusion
    \[
    \bigoplus_{i\in I}M_i^\vee\lto \prod_{i\in I}M_i^\vee
    \]
    yields an epimorphism
    \[
      (M^\vee)^\vee\cong \left(\prod_{i\in I}M_i^\vee\right)^\vee\lto
      \left(\bigoplus_{i\in I}M_i^\vee\right)^\vee\cong \prod_{i\in
        I}(M_i^{\vee})^\vee.
    \]
    If $M$ is reflexive, this identifies with the canonical inclusion
    $M\to \prod_{i\in I}M_i$, so it is an isomorphism. Thus $M_i=0$
    for almost all $i$.

Suppose $M\neq 0$ has no indecomposable direct summand. Then we get a
decomposition $M=M_1\oplus M^1$ such that both $M_1$ and $M^1$ are
non-zero and admit no indecomposable direct summands. We continue and
decompose $M^i=M_{i+1}\oplus M^{i+1}$ as before, for all $i\ge
1$. This yields an inclusion $\bigoplus_{i\ge 1}M_i\to M$. If $M$ is
reflexive, then the same holds for $\bigoplus_{i\ge 1}M_i$. But this
is impossible by the first assertion. Thus $M$ admits an
indecomposable direct summand.
\end{proof}

\begin{rem}
 The isomorphism classes of reflexive modules form a set, because
  the isomorphism classes of indecomposable pure-injective modules
  form a set.
\end{rem}

\begin{rem}
  Morita duality provides another context where the notion of a reflexive
  module has been studied before; see \cite{Mo1958,Os1966}. This
  context is one way more general (as two rings are involved, not
  necessarily commutative), but also more restrictive (as the rings are
  necessarily semiperfect). Nonetheless, there are parallels. The first
  part of Lemma~\ref{le:Matlis} is the assertion of
  \cite[Lemma~2.13]{Os1966}, though the proofs are different.
\end{rem}  

 Let $\La$ be an associative $A$-algebra. Then Matlis duality
  $\Hom_A(-,E)$ induces an adjoint pair of functors between the
  categories of left and right $\La$-modules, respectively. The above
  Theorem~\ref{th:KS} remains true for $\La$-modules, with same
  proof. For an intrinsic description of Matlis reflexive $\La$-modules, see
  \cite{He1993,Kr1997}.

\begin{cor}
  For any associative algebra over a commutative ring, the Matlis
  reflexive modules form a Serre subcategory which is
  Krull--Schmidt.\qed
\end{cor}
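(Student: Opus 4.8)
The plan is to run the proof of Theorem~\ref{th:KS} again, after checking that its two supporting lemmas remain valid for $\La$-modules. Fix the conventions from the paragraph above: for a left $\La$-module $M$ the dual $M^\vee=\Hom_A(M,E)$ is the corresponding right $\La$-module, the evaluation map $\eta_M\colon M\to(M^\vee)^\vee$ is $\La$-linear, and symmetrically for right modules. Since $E$ is an injective cogenerator for $A$-modules, the functor $\Hom_A(-,E)$ is exact on $\La$-modules and $\eta_M$ is a monomorphism of $\La$-modules. The Snake Lemma argument from the proof of Theorem~\ref{th:KS} then shows at once that the Matlis reflexive $\La$-modules form a Serre subcategory.

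The step that needs a genuine (if short) argument is the analogue of Lemma~\ref{le:pure-inj}: a $\La$-module is pure-injective if and only if it is a direct summand of a Matlis dual module. Here the workhorse is the natural isomorphism
\[
\Hom_\La(F,\Hom_A(N,E))\;\cong\;\Hom_A(F\otimes_\La N,E)
\]
for $F$ a right and $N$ a left $\La$-module (and its mirror image). From it one reads off that each Matlis dual $\La$-module is pure-injective, since a pure monomorphism of right $\La$-modules remains exact after $-\otimes_\La N$ and hence becomes an epimorphism after $\Hom_A(-,E)$; and that $\eta_M$ is a \emph{pure} monomorphism of $\La$-modules, because $\eta_M^\vee\colon(M^\vee)^{\vee\vee}\to M^\vee$ is split epi by the triangle identity, and, translated through the isomorphism above, this is precisely the defining property of a pure monomorphism. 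Consequently a reflexive $\La$-module is pure-injective and a direct summand of its own double dual.

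It remains to transfer Lemma~\ref{le:Matlis}. For a decomposition $M=\bigoplus_{i\in I}M_i$ of $\La$-modules one still has $M^\vee\cong\prod_{i\in I}M_i^\vee$ as right $\La$-modules, and applying the exact duality to the canonical map $\bigoplus_i M_i^\vee\to\prod_i M_i^\vee$ yields an epimorphism $(M^\vee)^\vee\to\prod_i(M_i^\vee)^\vee$; if $M$ is reflexive this is the canonical map $\bigoplus_i M_i\to\prod_i M_i$, so $M_i=0$ for almost all $i$. The existence of an indecomposable summand in a nonzero reflexive $\La$-module follows from the same iteration as in the commutative case: repeatedly splitting a counterexample as $M=M_1\oplus\dots\oplus M_n\oplus M^n$ exhibits $\bigoplus_{i\ge 1}M_i$ as a submodule of $M$, hence reflexive by the Serre property just established, contradicting the first assertion.

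With both lemmas available, the proof of Theorem~\ref{th:KS} carries over verbatim: a reflexive $\La$-module is pure-injective, so it decomposes as a discrete part plus a continuous part \cite[Theorem~8.25]{JL1989} --- the structure theory of pure-injective modules being valid over any associative ring --- the transferred Lemma~\ref{le:Matlis} kills the continuous part and makes the index set of the discrete part finite, and local endomorphism rings of indecomposable pure-injectives give the Krull--Schmidt property. I expect the only real checking to lie in the second paragraph, namely that purity and pure-injectivity of $\La$-modules behave correctly with respect to a duality defined through the subring $A$; everything else is transcription.
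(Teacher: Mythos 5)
Your proposal is correct and follows exactly the route the paper intends: the corollary is stated with no separate proof because Theorem~\ref{th:KS} and its two lemmas carry over verbatim to $\La$-modules, and what you supply is precisely the verification (via the adjunction $\Hom_\La(F,\Hom_A(N,E))\cong\Hom_A(F\otimes_\La N,E)$) that purity and pure-injectivity behave correctly with respect to the duality defined through $A$. The only cosmetic slip is saying a pure monomorphism ``remains exact'' after $-\otimes_\La N$ where you mean it remains a monomorphism.
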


Recall from \cite{Ze1953} that a module $M$ is \emph{linearly compact}
if for any codirected family of submodules $(M_i)_{i\in I}$ of $M$ and
any family of cosets $(x_i+M_i)_{i\in I}$ we have
$\bigcap_{i\in I} x_i+M_i\neq\varnothing$ provided that
$\bigcap_{i\in J} x_i+M_i\neq\varnothing$ for all finite
$J\subseteq I$.  An equivalent condition is that the canonical map
$M\to\varprojlim M_i$ is an epimorphism. This condition is the
categorical dual of the fact that for any directed family of
submodules $M_i\subseteq M$ the canonical map $\varinjlim M_i\to M$ is
a monomorphism.  Thus any Matlis reflexive module is linearly compact
(cf.\ Proposition~\ref{pr:reflexive}).
  
  The linearly compact modules form a Serre subcategory (cf.\
  Proposition~2, Proposition~3, and Proposition~9 of \cite{Ze1953}),
  and any decomposition $M=\bigoplus_{i\in I}M_i$ of a linearly
  compact module implies $M_i=0$ for almost all $i$ (cf.\ 
  Proposition~\ref{pr:reflexive}).  Moreover, if $M$ is linearly
  compact when viewed as a module over its endomorphism ring, then $M$
  is pure-injective \cite[Corollary~7.4]{JL1989}. Thus any linearly
  compact module over a commutative ring is pure-injective.  Combining
  these facts, it follows that the assertion of Theorem~\ref{th:KS}
  remains true for the class of linearly compact modules.

\begin{cor}
For any commutative ring, the linearly compact modules form a Serre subcategory which is
Krull--Schmidt.\qed
\end{cor}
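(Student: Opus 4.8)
The plan is to mimic the proof of Theorem~\ref{th:KS} line by line, replacing ``Matlis reflexive'' by ``linearly compact''. Three facts, all assembled in the paragraph preceding the corollary, make this possible: (i) the linearly compact modules form a Serre subcategory \cite{Ze1953}, so in particular submodules and direct summands of linearly compact modules are again linearly compact; (ii) if $M=\bigoplus_{i\in I}M_i$ is linearly compact then $M_i=0$ for almost all $i$ (Proposition~\ref{pr:reflexive}); and (iii) every linearly compact module over a commutative ring is pure-injective. For (iii) the only genuinely new input is that, $A$ being commutative, the structure map $A\to\End_A(M)$ has central image, so every $\End_A(M)$-submodule of $M$ is in particular an $A$-submodule; hence $M$, being linearly compact over $A$, is also linearly compact over $\End_A(M)$, and \cite[Corollary~7.4]{JL1989} applies.

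Next I would establish the analogue of Lemma~\ref{le:Matlis}: a non-zero linearly compact module has an indecomposable direct summand. The proof of that lemma transfers verbatim — if $M\ne 0$ had no indecomposable summand one would iteratively split off non-zero summands $M_i$, producing an embedding $\bigoplus_{i\ge 1}M_i\hookrightarrow M$; by (i) this submodule would be linearly compact, contradicting (ii).

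With these in hand, the Krull--Schmidt assertion follows exactly as in Theorem~\ref{th:KS}. A linearly compact module $M$ is pure-injective by (iii), so by \cite[Theorem~8.25]{JL1989} it splits as $M=M'\oplus M''$ with $M'$ discrete and $M''$ continuous; both summands are linearly compact by (i). Since $M''$ has no indecomposable direct summand, the previous paragraph gives $M''=0$. Thus $M=M'$ is a pure-injective envelope of a direct sum $\bigoplus_{i\in I}M_i$ of indecomposable summands, and $\bigoplus_{i\in I}M_i$ embeds as a submodule of the linearly compact module $M$, hence is linearly compact; by (ii) the index set $I$ is finite once zero terms are discarded, so $\bigoplus_{i\in I}M_i$ is already pure-injective and coincides with its envelope $M$. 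Each $M_i$ is indecomposable pure-injective, hence has local endomorphism ring, and essential uniqueness of the decomposition is Azumaya's theorem.

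I do not expect a serious obstacle; the argument is essentially a transcription of the reflexive case. The one step deserving care is the last one, where one must invoke that a finite direct sum of pure-injective modules is pure-injective in order to conclude $M=\bigoplus_{i\in I}M_i$ rather than merely that the latter is an essential pure submodule. The other delicate point, fact (iii), is precisely the centrality observation above, which is where commutativity of $A$ is used.
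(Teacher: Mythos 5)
Your proof is correct and follows essentially the same route as the paper: it assembles the same three facts (Serre subcategory via Zelinsky, finiteness of direct sum decompositions via the argument of Proposition~\ref{pr:reflexive}, and pure-injectivity via centrality and \cite[Corollary~7.4]{JL1989}) and then reruns the proof of Theorem~\ref{th:KS} with the analogue of Lemma~\ref{le:Matlis}. The centrality observation explaining where commutativity enters is exactly the intended point.
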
  

Note that commutativity is needed. For instance,
  any artinian module is linearly compact, but  the Krull--Schmidt property fails for artinian modules
  \cite{KR2000}.

\section{Matlis duality}

Now assume that the commutative ring $A$ is noetherian. Then we have the isomorphism
\[\coprod_{\frm\in\Omega}E(A/\frm)\cong
  E\left(\coprod_{\frm\in\Omega}A/\frm\right)\] since any coproduct of
injective modules is again injective. For any ideal $\fra$ we write
$A_\fra^\wedge=\varprojlim A/\fra^i$ for the completion with respect
to $\fra$.  We set
\[\widehat A\coloneqq \prod_{\frm\in\Omega}A_\frm^\wedge.\] 

\begin{lem}\label{le:End}
  There is an isomorphism of rings $ \End_A(E)\cong\widehat A$.
\end{lem}
\begin{proof}
For
$\frm\in\Omega$ we have
\[E(A/\frm)\cong\varinjlim\Hom_A(A/\frm^i,E(A/\frm))=\varinjlim\Hom_A(A/\frm^i,E)\]
since $E(A/\frm)$ is $\frm$-torsion. Thus
\[
  \begin{aligned}
  \End_A(E)&\cong \prod_{\frm\in\Omega}\Hom_A(E(A/\frm),E)\\
           &\cong \prod_{\frm\in\Omega}\Hom_A(\varinjlim\Hom_A(A/\frm^i,E),E)\\
           &\cong \prod_{\frm\in\Omega}\varprojlim\Hom_A(\Hom_A(A/\frm^i,E),E)\\
           &\cong \prod_{\frm\in\Omega}\varprojlim A/\frm^i\\
 &=\widehat  A.
\end{aligned}  
\]
The first three isomorphisms are clear since $\Hom_A(-,E)$ sends
colimits to limits. The last isomorphism uses that finite length
modules are Matlis reflexive.
\end{proof}

For an $\widehat A$-module $M$ and $\frm\in\Omega$ we set
$M(\frm)\coloneqq M A_\frm^\wedge$. We say that $M$ has \emph{finite
  support} if $M(\frm)=0$ for almost all $\frm\in\Omega$, and $M$ is
\emph{locally finitely supported} if the following equivalent
conditions hold:
\begin{enumerate}
\item[(LF1)] The canonical map
  $\bigoplus_{\frm\in\Omega}M(\frm)\to M$ is an isomorphism.
\item[(LF2)] $M$ is a filtered colimit of noetherian modules with
  finite support.
\end{enumerate}
From now on we restrict ourselves to the study of locally finitely
supported $\widehat A$-modules.  These modules form a locally
noetherian Grothendieck category, since
$\{A_\frm^\wedge\mid\frm\in\Omega\}$ is a set of noetherian
generators.

The following is essentially due to Matlis. In \cite{Ma1958} he
considered the case that $A$ is local. We write $\art A$ for the category of artinian $A$-modules and
$\noeth A$ for the category of noetherian $A$-modules. 

\begin{prop}\label{pr:equivalence}
  The  functors 
\[
  \begin{tikzcd}[column sep = huge]
  \art A=\art\widehat A
  \arrow[yshift=.75ex]{rrr}{\Hom_A(-,E)=\Hom_{\widehat A}(-,E)}
  &&&\noeth\widehat A
  \arrow[yshift=-.75ex]{lll}{\Hom_{\widehat A}(-,E)}
  \end{tikzcd}
\]
are mutually quasi-inverse contravariant equivalences.
\end{prop}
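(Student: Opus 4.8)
The plan is to prove Proposition~\ref{pr:equivalence} in three stages: first identify the two categories of modules, then verify that Matlis duality lands in the stated target, and finally check that the evaluation map is an isomorphism in both directions.

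\medskip

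\noindent\emph{Step 1: Reduction to $\widehat A$-modules.} First I would observe that an artinian $A$-module $M$ is automatically an $\widehat A$-module in a canonical way: since $M$ is artinian it has finite length over its localizations, hence it is supported at finitely many maximal ideals and decomposes as $M=\bigoplus_{\frm}M_\frm$ with each $M_\frm$ an $A_\frm$-module on which some power $\frm^i$ acts as zero; such a module is a module over $A_\frm^\wedge=\varprojlim A/\frm^i$, and compatibly over $\widehat A$. Conversely any artinian $\widehat A$-module is artinian over $A$ via the structure map $A\to\widehat A$. This gives the identification $\art A=\art\widehat A$, and along the way that $\Hom_A(M,E)=\Hom_{\widehat A}(M,E)$ for $M$ artinian (the $\widehat A=\End_A(E)$-action on $E$ from Lemma~\ref{le:End} matches the one coming from the $\widehat A$-module structure on $M$).

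\medskip

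\noindent\emph{Step 2: Matlis duality maps $\art A$ to $\noeth\widehat A$ and back.} For $M$ artinian, I would show $M^\vee=\Hom_A(M,E)$ is a noetherian $\widehat A$-module. Since $M$ decomposes over the finitely many maximal ideals in its support, it suffices to treat the local case: if $M$ is an artinian module over the local ring $A_\frm$, annihilated by a power of $\frm$, then $M$ is a finite-length, hence finitely generated, module over the complete local ring $R\coloneqq A_\frm^\wedge$, and $M^\vee=\Hom_R(M,E_R)$ where $E_R=E(A/\frm)$ is the injective hull of the residue field; by Matlis's classical local duality $M^\vee$ is a noetherian $R$-module, and it is visibly finitely supported, hence noetherian over $\widehat A$. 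Conversely, for $N$ a noetherian $\widehat A$-module (necessarily finitely supported, hence a finite product over its support of finitely generated modules over complete local rings), $N^\vee=\Hom_{\widehat A}(N,E)$ is artinian: again reduce to the local case and use that the dual of a finitely generated module over a complete local ring is artinian. The key classical input here is Matlis's structure theory for the complete local case \cite{Ma1958}: over a complete noetherian local ring $R$ with residue field $k$, $E=E_R(k)$ satisfies $\End_R(E)\cong R$, and $\Hom_R(-,E)$ exchanges $\noeth R$ and $\art R$.

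\medskip

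\noindent\emph{Step 3: The evaluation maps are isomorphisms.} It remains to show that for $M$ artinian the unit $M\to(M^\vee)^\vee$ is an isomorphism, and likewise for $N$ noetherian. By Step~1 and Step~2 we may work locally over a complete noetherian local ring $R$, where $M$ has finite length and $N$ is finitely generated, and then this is exactly Matlis's theorem that finitely generated and artinian modules over a complete local ring are reflexive (for finite length modules it is elementary, for finitely generated ones it follows by taking a presentation $R^a\to R^b\to N\to 0$, dualizing twice, and using the five lemma together with reflexivity of $R$ itself). Since all the functors involved are additive and the support decompositions are finite, these local statements assemble to the global ones. I expect the main obstacle to be the bookkeeping in Step~1 and Step~2 — making precise that artinian $A$-modules are the same as finitely-supported "$\widehat A$-modules that are finite products of finite-length modules over complete local rings", and checking that the two a priori different $\widehat A$-actions (one intrinsic, one via $\widehat A=\End_A E$) agree so that $\Hom_A(-,E)$ and $\Hom_{\widehat A}(-,E)$ genuinely coincide on these subcategories; once that is set up, the heart of the proposition is just the classical complete-local Matlis duality applied one maximal ideal at a time.
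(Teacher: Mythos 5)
Your overall architecture is reasonable and close in spirit to the paper's: reduce to finitely supported modules, pass to complete local pieces, and run the duality there using $\End(E)\cong\widehat A$ and presentations. (The paper reduces instead to the semilocal case and argues that artinian modules are exactly the kernels of maps $E^p\to E^q$ while noetherian $\widehat A$-modules are the cokernels of maps $\widehat A^p\to\widehat A^q$, so that the exact functor $\Hom_A(-,E)$ and the equivalence $\add E\iso\add\widehat A$ do all the work; your five-lemma argument on the noetherian side is essentially the same thing.) But there is a genuine error that recurs in all three of your steps: you repeatedly assert that an artinian module over a (complete) noetherian local ring has finite length, equivalently is annihilated by a single power of $\frm$. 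This is false: $E(A/\frm)$ itself, e.g.\ the Pr\"ufer module $\bbZ(p^\infty)$ over $\bbZ_p^\wedge$, is artinian of infinite length, and no single power of $\frm$ kills it. What is true is only that every \emph{element} of an artinian module is killed by some power of $\frm$ (the module is $\frm$-torsion); that weaker statement is what gives the $A_\frm^\wedge$-module structure in your Step~1, so the conclusion there survives even though the stated justification does not.

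The error matters most in Steps~2 and~3, because the artinian side is precisely the non-trivial half of the proposition: if artinian modules really were finite length, the whole statement would be the elementary reflexivity of finite length modules. As written, your argument that $M^\vee$ is noetherian for $M$ artinian, and that the evaluation map $M\to (M^\vee)^\vee$ is an isomorphism, rests on this false reduction. The fix is available and you even half-state it: invoke Matlis's theorem for \emph{artinian} (not finite length) modules over a complete local ring, where $\Hom_R(-,E_R(k))$ exchanges $\art R$ and $\noeth R$; or, more self-containedly, use that over a complete (semi)local ring $E$ is an artinian injective cogenerator, so every artinian module embeds into some $E^p$ with artinian cokernel, hence is the kernel of a map $E^p\to E^q$, and then dualize this copresentation to a presentation by copies of $\widehat A$ exactly as you do for the noetherian side. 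With that replacement the proof goes through.
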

\begin{proof}
  Locally finitely supported modules that are artinian or noetherian
  have finite support. Thus we may restrict to modules that are
  supported on a finite subset $\Psi\subseteq\Omega$.  Let $A_\Psi$
  denote the localisation of $A$ with respect to
  $\bigcap_{\frm\in\Psi}(A\smallsetminus\frm)$. This ring is semilocal
  and its maximal ideals identify with the ones in $\Psi$.  Thus the
  completion of $A_\Psi$ with respect to the Jacobson radical
  $J(A_\Psi)$ is isomorphic to $\prod_{\frm\in\Psi}
  A_{\frm}^\wedge$. We conclude that the assertion reduces to the case
  that $A$ is semilocal. Then the $A$-module $E$ is artinian
  \cite[Lemma~2.4.19]{Kr2022} and therefore $\widehat A\cong\End_A(E)$
  is noetherian.

  Now suppose that $A$ is semilocal. Then the artinian $A$-modules are
  precisely the kernels of maps $E^p\to E^q$ for positive integers
  $p,q$, while the noetherian $\widehat A$-modules are presicely the
  cokernels of maps $\widehat A^p\to \widehat A^q$ for positive integers
  $p,q$. It remains to observe that $\Hom_A(-,E)$ is an exact functor
  that induces a contravariant
  equivalence $\add E\iso\add \widehat A$, with quasi-inverse given by
  $\Hom_{\widehat A}(-,E)$. Here, $\add M$ denotes the full
  subcategory of all direct summands of finite direct sums of copies
  of $M$, and the equivalence is clear from the isomorphism
  \[\End_A(E)\iso \End_{\widehat A}(\widehat A)=\widehat A\]
from Lemma~\ref{le:End}  which is induced by $\Hom_A(-,E)$. 
\end{proof}

\section{Categories of extensions}

Let $\A$ be an abelian category. We fix a pair of Serre subcategories
$\C,\D$ of $\A$ and write $\C\ast\D$ for the full subcategory
consisting of
objects $X\in\A$ that  fit into an exact sequence
\[0\lto X'\lto X\lto X''\lto 0\qquad\text{with} \qquad
  X'\in\C,\,X''\in\D.\]

We say that $\C\subseteq\A$ is \emph{right cofinal} if for every
epimorphism $X\to Y$ in $\A$ with $Y\in\C$ there exists an epimorphism
$\bar X\to Y$ in $\C$ that factors through $X\to Y$.

\begin{lem}\label{le:ext-cat}
  The subcategory $\C\ast\D$ of $\A$ is closed under subobjects and
  quotient objects. In particular, $\C\ast\D$ is an abelian
  category. Moreover, $\C\ast\D$ is closed under extensions provided
  that $\C\subseteq\A$ is right cofinal.
\end{lem}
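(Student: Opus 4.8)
The statement has three parts: (1) $\C\ast\D$ is closed under subobjects, (2) $\C\ast\D$ is closed under quotients, and (3) $\C\ast\D$ is closed under extensions when $\C$ is right cofinal. For (1) and (2), I would take an object $X\in\C\ast\D$ with a chosen exact sequence $0\to X'\to X\to X''\to 0$, $X'\in\C$, $X''\in\D$, and a subobject $Y\subseteq X$ (the quotient case is dual, or follows by applying the subobject case after passing to appropriate quotients). Set $Y'\coloneqq Y\cap X'$ and $Y''\coloneqq Y/Y'$; then $Y'$ is a subobject of $X'$, hence in $\C$ since $\C$ is Serre (in particular closed under subobjects), and $Y''$ embeds into $X/X'=X''$, hence lies in $\D$. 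The sequence $0\to Y'\to Y\to Y''\to 0$ then exhibits $Y\in\C\ast\D$. For the quotient statement, given an epimorphism $X\twoheadrightarrow Z$, let $Z'$ be the image of $X'$ in $Z$; then $Z'$ is a quotient of $X'\in\C$, so $Z'\in\C$, and $Z/Z'$ is a quotient of $X/X'=X''\in\D$, so $Z/Z'\in\D$, giving $Z\in\C\ast\D$. That $\C\ast\D$ is then abelian is the standard fact that a full subcategory of an abelian category closed under kernels and cokernels (equivalently here, under subobjects and quotients, which gives kernels, cokernels, and images computed as in $\A$) is itself abelian.

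\textbf{The extension step.} For (3), the content lies here. Suppose $0\to X\xrightarrow{i} W\xrightarrow{p} Z\to 0$ is exact in $\A$ with $X,Z\in\C\ast\D$; I must produce a filtration of $W$ with subquotient in $\C$ and quotient in $\D$. Choose $0\to X'\to X\to X''\to 0$ and $0\to Z'\to Z\to Z''\to 0$ as above. First pull back the subobject $Z'\subseteq Z$ along $p$ to get $W_1\coloneqq p^{-1}(Z')\subseteq W$, so that $W/W_1\cong Z''\in\D$ and $0\to X\to W_1\to Z'\to 0$ is exact. It now suffices to show $W_1\in\C\ast\D$, because then any extension of $W_1$ by the $\D$-object $Z''$ still has a two-step filtration of the required type (the $\C$-part of $W_1$ stays, and the $\D$-part of $W_1$ together with $Z''$ assembles into a $\D$-object since $\D$ is Serre, hence closed under extensions). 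So replace $W$ by $W_1$ and assume $Z=Z'\in\C$. Now form $W_0\coloneqq i(X')\subseteq W$; then $W_0\in\C$ and $W/W_0$ sits in an exact sequence $0\to X''\to W/W_0\to Z\to 0$ with $X''\in\D$ and $Z\in\C$. The remaining task is to split off a $\C$-subobject of $W/W_0$ large enough that the quotient lands in $\D$: this is exactly where right cofinality of $\C$ enters. Applying it to the epimorphism $W/W_0\to Z$ with $Z\in\C$ produces an object $\bar Z\in\C$ and a factorization $W/W_0\to\bar Z\to Z$ with $W/W_0\to\bar Z$ epi; let $\bar W$ be the image of $\bar Z\to W/W_0$... more carefully, let $V\subseteq W/W_0$ be the subobject through which $W/W_0\to\bar Z\to Z$ factors so that $V$ surjects onto $\bar Z$. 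Then $V\in\C$ (quotient of $\bar Z$, but one must check $V$ is a quotient of $\bar Z$ — rather, take $V=\ker(W/W_0\to\bar Z\to\bar Z/\!\!\operatorname{im})$; the clean formulation is: $V$ is the preimage in $W/W_0$ of a suitable subobject). The key point is that $V\supseteq X''$ fails in general, so instead I take the subobject $V'\coloneqq V+X''\subseteq W/W_0$; this is an extension of a quotient of $V\in\C$ by (a quotient of) $X''\in\D$...

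\textbf{A cleaner route for (3).} The above is getting entangled; the efficient argument is: after reducing to $Z\in\C$, consider $Q\coloneqq W/W_0$, so $0\to X''\to Q\to Z\to 0$ with $X''\in\D$, $Z\in\C$. Right cofinality applied to $Q\twoheadrightarrow Z$ gives $g\colon \bar Z\twoheadrightarrow Z$ in $\C$ and $h\colon \bar Z\to Q$ with $(Q\to Z)\circ h = g$; replacing $\bar Z$ by its image in $Q$ we may assume $h$ is monic, so $\bar Z\subseteq Q$ with $\bar Z\in\C$ and $\bar Z\twoheadrightarrow Z$. Then $Q/\bar Z$ is a quotient of $\ker(Q\to Z)=X''\in\D$, hence in $\D$. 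Thus $0\to \bar Z\to Q\to Q/\bar Z\to 0$ shows $Q\in\C\ast\D$ with $\C$-part $\bar Z$ and $\D$-part $Q/\bar Z$. Finally, $W$ is an extension of $Q=W/W_0$ by $W_0\in\C$: pulling the $\C$-subobject $\bar Z\subseteq Q$ back to $W$ yields $W_2\subseteq W$ with $W_2/W_0\cong\bar Z\in\C$, so $W_2\in\C$ (Serre, closed under extensions), and $W/W_2\cong Q/\bar Z\in\D$. Hence $0\to W_2\to W\to W/W_2\to 0$ witnesses $W\in\C\ast\D$. The one genuine obstacle is the extension step, and within it the one place cofinality is indispensable is producing $\bar Z\subseteq Q$ mapping onto $Z$; without a $\C$-subobject of $Q$ surjecting onto $Z$, the naive filtration of $Q$ has its pieces in the wrong order ($\D$ then $\C$), which cannot in general be rectified.
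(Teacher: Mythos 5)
Your proof is correct and follows essentially the same route as the paper: the closure under subobjects and quotients is the same direct check, and for the extension step both arguments pull back $Z'$ along $Y\to Z$ and use right cofinality to produce a $\C$-object mapping onto $Z'$ which, combined with $X'$, yields the $\C$-part of a filtration of the middle term (the paper applies cofinality to $Y\times_Z Z'\to Z'$ directly and sets $Y'=\Im(X'\oplus\bar Y\to Y)$, whereas you first quotient by $X'$ and then pull the resulting $\C$-subobject back; the difference is cosmetic). The tangled first attempt at step (3) should simply be deleted in favour of your \emph{cleaner route}, which is complete.
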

\begin{proof}
The first assertion is clear. Now let $0\to X\to Y\to Z\to 0$ be an
exact sequence in $\A$ with subobjects $X'\subseteq X$ and $Z'\subseteq Z$ in $\C$
such that $X''=X/X'$ and $Z''=Z/Z'$ are in $\D$. We consider the pullback
$Y\times_Z Z'$ and get an epimorphism $\pi\colon Y\times_Z Z'\to Z'$. The cofinality condition
yields an epimorphism $\bar Y\to Z'$ in $\C$ that factors through
$\pi$; in particular the composite
with $Z'\to Z$ factors through
$Y\to Z$ via a morphism $\bar Y\to Y$. We set
$Y'\coloneqq\Im(X'\oplus\bar Y\to Y)$ and $Y''\coloneqq Y/Y'$. Then we
have $Y'\in\C$ and an exact sequence $X''\to Y''\to Z''\to 0$ which
shows that $Y''\in\D$. Thus $Y$ lies in  $\C\ast\D$.
\end{proof}

\begin{lem}\label{le:arno}
The inclusions $\C\to\C\ast\D$ and  $\D\to\C\ast\D$ induce  equivalences
\[\frac{\C}{\C\cap\D}\longiso\frac{\C\ast\D}{\D}\qquad\text{and}\qquad
  \frac{\D}{\C\cap\D}\longiso\frac{\C\ast\D}{\C}.\]
\end{lem}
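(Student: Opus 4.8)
The plan is to verify both equivalences by checking that each of the inclusion functors $\C\to\C\ast\D$ and $\D\to\C\ast\D$ induces, after passing to the indicated Serre quotients, a functor that is essentially surjective and fully faithful; by symmetry of the construction $\C\ast\D$ it suffices to treat one of them, say $\D\to\C\ast\D$, which should induce $\D/(\C\cap\D)\iso(\C\ast\D)/\C$. Recall that the quotient $\B/\calS$ by a Serre subcategory has the same objects as $\B$, and a morphism $X\to Y$ becomes invertible in $\B/\calS$ precisely when its kernel and cokernel lie in $\calS$; moreover the quotient functor $\B\to\B/\calS$ is exact. Note first that the composite $\D\to\C\ast\D\to(\C\ast\D)/\C$ kills exactly the objects of $\D$ lying in $\C$, i.e.\ the objects of $\C\cap\D$ (here we use that $\C\cap\D$ is a Serre subcategory of $\D$, and that for $X\in\D$ one has $X\in\C$ in $\C\ast\D$ iff $X$ becomes zero in the quotient by $\C$), so the functor factors through $\D/(\C\cap\D)$.

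For essential surjectivity: given $X\in\C\ast\D$, choose an exact sequence $0\to X'\to X\to X''\to 0$ with $X'\in\C$ and $X''\in\D$. The quotient map $X\to X''$ has kernel $X'\in\C$ and is epic, so it becomes an isomorphism in $(\C\ast\D)/\C$; hence $X$ is isomorphic in the quotient to an object of $\D$. For full faithfulness, I would use the standard calculus of fractions description of $\Hom$ in a Serre quotient: a morphism $X\to Y$ in $(\C\ast\D)/\C$ between objects of $\D$ is represented by a span $X\leftarrow X_0\to Y$ in $\C\ast\D$ where $X_0\to X$ has kernel and cokernel in $\C$. The key point is that one may replace $X_0$ by an object of $\D$ without changing the span up to equivalence: $X_0$ sits in $\C\ast\D$, so it has a subobject $X_0'\in\C$ with $X_0/X_0'\in\D$; since the map $X_0\to X$ has cokernel in $\C$ and $X\in\D$, a short diagram chase shows $X_0'$ already maps to $0$ modulo $\C$ inside $X$, so replacing $X_0$ by $X_0/X_0'\in\D$ yields an equivalent fraction with all three terms in $\D$. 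From there, fractions in $(\C\ast\D)/\C$ between objects of $\D$ coincide with fractions in $\D/(\C\cap\D)$, because for a morphism in $\D$ having kernel and cokernel in $\C$ is the same as having kernel and cokernel in $\C\cap\D$. This gives that $\D/(\C\cap\D)\to(\C\ast\D)/\C$ is fully faithful, and combined with essential surjectivity it is an equivalence; the same argument with the roles of $\C$ and $\D$ exchanged gives $\C/(\C\cap\D)\iso(\C\ast\D)/\D$.

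The main obstacle I expect is the step of ``cleaning up'' a fraction so that its intermediate object lies in $\D$: one must be careful that the reductions used to simplify roofs in a Serre quotient stay inside the subcategory $\C\ast\D$ (which is abelian by Lemma~\ref{le:ext-cat}, so this is legitimate) and that the manipulations genuinely preserve the equivalence class of the fraction. An alternative, perhaps cleaner, route would be to invoke the universal property directly: show that the composite $\D\to(\C\ast\D)/\C$ inverts exactly the maps whose kernel and cokernel lie in $\C\cap\D$ and is ``cofinal enough'' that the induced functor on the quotient is an equivalence — but verifying the hypotheses of such a recognition criterion amounts to essentially the same span-cleaning argument, so I would carry out the explicit version above.
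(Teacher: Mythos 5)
Your argument is correct and follows essentially the same route as the paper: essential surjectivity via the defining short exact sequence, and full faithfulness by showing that every denominator (roof) can be replaced by one lying in the subcategory, which is exactly the cofinality condition the paper verifies (you phrase it through an explicit calculus-of-fractions clean-up, the paper cites the criterion directly). The only cosmetic difference is that you treat $\D\to(\C\ast\D)/\C$ first and the paper treats $\C\to(\C\ast\D)/\D$ first, each deducing the other by symmetry.
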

\begin{proof}
  We may assume $\A=\C\ast\D$.  The inclusion $\C\to\A$ is exact and
  the kernel of the composite with the quotient functor $\A\to\A/\D$
  equals $\C\cap\D$. It is clear that the composite is essentially
  surjective, because for any $X\in\A$ the monomorphism $X'\to X$ is
  an isomorphism in $\A/\D$. In order to show that the induced functor
$\C/(\C\cap\D)\to\A/\D$ is fully faithful we
  need to check the following cofinality condition: for each map
  $\p\colon X\to Y$ such that $Y\in\C$ and $\Ker\p$ and $\Coker\p$ are
  in $\D$, there is a morphism $\psi\colon X'\to X$ such that
  $X'\in\C$ and $\Ker(\p\psi)$ and $\Coker(\p\psi)$ are in $\D$. But
  this is clear, since there is such a monomorphism $\psi$ with
  $\Coker\psi\in\D$. The proof of the other equivalence is dual.
\end{proof}

Writing an object as an extension of objects in $\C$ and $\D$ is
unique up to objects from $\C\cap\D$.  To make this precise, fix a
pair of exact sequences $0\to X_i\to Y\to Z_i\to 0$ ($i=1,2$) with
$X_i\in\C$ and $Z_i\in\D$. We set $\bar X\coloneqq X_1\cap X_2$ and
this yields an exact sequence $0\to \bar X\to Y\to \bar Z\to 0$ such
that $\bar X\in\C$ and $\bar Z\in\D$. Moreover, the induced morphisms
$\bar X\to X_i$ and $\bar Z\to Z_i$ have kernels and cokernels in
$\C\cap\D$. In the language of quotient categories we see that $\C$
and $\D$ form a torsion pair, modulo the subcategory $\C\cap\D$.

\begin{lem}\label{le:torsion}
  The categories $\C$ and $\D$ yield a pair of full
  subcategories \[\left(\frac{\C}{\C\cap\D}, \frac{\D}{\C\cap\D}\right)\] of
  $(\C\ast\D)/(\C\cap\D)$ which form a torsion pair.
\end{lem}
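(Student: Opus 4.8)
The plan is to verify directly that the two subcategories, viewed inside the quotient $(\C\ast\D)/(\C\cap\D)$, satisfy the defining axioms of a torsion pair: that there are no nonzero morphisms from an object of $\C/(\C\cap\D)$ to an object of $\D/(\C\cap\D)$, and that every object of the quotient category sits in a short exact sequence with torsion part in $\C/(\C\cap\D)$ and torsion-free part in $\D/(\C\cap\D)$. By Lemma~\ref{le:ext-cat} we may replace $\A$ by $\C\ast\D$, so every object $Y$ already comes equipped with an exact sequence $0\to X\to Y\to Z\to 0$ with $X\in\C$ and $Z\in\D$; this is exactly the candidate torsion sequence, so the existence half is essentially built in, and the real content is the Hom-vanishing together with checking that the pair is closed under the appropriate constructions.

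First I would record the closure properties. The subcategory $\C/(\C\cap\D)$ is closed under quotients in the quotient category: a quotient of an object of $\C$ in $\C\ast\D$ is again in $\C$ since $\C$ is a Serre subcategory of $\A$, hence closed under quotients, and the quotient functor is exact. Dually $\D/(\C\cap\D)$ is closed under subobjects. For a torsion pair one wants the torsion class closed under quotients and extensions and the torsion-free class closed under subobjects and extensions; closure under extensions on both sides follows because $\C$ and $\D$ are each closed under extensions in $\A$ (being Serre) and the quotient functor preserves short exact sequences. So all four closure conditions hold.

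Next comes the Hom-vanishing, which I expect to be the main point. Suppose $f\colon U\to V$ is a morphism in $(\C\ast\D)/(\C\cap\D)$ with $U$ in the image of $\C$ and $V$ in the image of $\D$. A morphism in the quotient category is represented by a roof $U \leftarrow U' \to V$ where $U'\to U$ has kernel and cokernel in $\C\cap\D$; since $U\in\C$ and $\C$ is Serre, $U'\in\C$ as well, so without loss of generality $f$ is represented by an honest morphism $g\colon U'\to V$ in $\C\ast\D$ with $U'\in\C$, $V\in\D$. Then $\Im(g)$ is a quotient of $U'\in\C$ and a subobject of $V\in\D$, hence lies in $\C\cap\D$; therefore $g$ factors as $U'\twoheadrightarrow \Im(g)\hookrightarrow V$ with $\Im(g)\in\C\cap\D$, which means $g$ becomes zero after applying the quotient functor $\C\ast\D\to(\C\ast\D)/(\C\cap\D)$. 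Thus $f=0$.

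Finally I would assemble these: for each object $Y$ of $(\C\ast\D)/(\C\cap\D)$, lift it to $\C\ast\D$, choose a presentation $0\to X\to Y\to Z\to 0$ with $X\in\C$, $Z\in\D$, and apply the exact quotient functor to obtain the required torsion sequence. Together with the Hom-vanishing and the closure properties this is precisely the data of a torsion pair; by the uniqueness remark preceding the statement (the intersection $X_1\cap X_2$ argument), the torsion and torsion-free parts of $Y$ are well defined in the quotient, which is a consistency check but not logically needed once the axioms are verified. The only mild subtlety is the roof-calculus step in the Hom-vanishing argument, where one must be careful that the intermediate object of a fraction representing a morphism out of $\C/(\C\cap\D)$ may again be taken in $\C$; this is where the Serre property of $\C$ is used, and it is the step I would write out most carefully.
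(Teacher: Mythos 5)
Your proof is correct and follows essentially the same route as the paper: verify the two torsion-pair axioms directly, with the key point being that a morphism in the quotient from an object of $\C$ to an object of $\D$ is represented by an honest morphism whose image lies in $\C\cap\D$ and hence vanishes. The closure properties you record are harmless but redundant given the criterion used, and the only thing the paper makes explicit that you gloss over is that the inclusions $\C/(\C\cap\D)\to(\C\ast\D)/(\C\cap\D)$ and $\D/(\C\cap\D)\to(\C\ast\D)/(\C\cap\D)$ are fully faithful (via the cofinality argument of Lemma~\ref{le:arno}), which justifies calling these full subcategories.
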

\begin{proof}
  We may assume $\A=\C\ast\D$ and set $\bar\A=\A/(\C\cap\D)$.  The
  cofinality argument in the proof of Lemma~\ref{le:arno} shows that
  the inclusions $\C\to\A$ and $\D\to\A$ induce fully faithful
  functors $\bar\C=\C/(\C\cap\D)\to\bar\A$ and
  $\bar\D=\D/(\C\cap\D)\to\bar\A$, respectively.

  For a pair of full subcategories $(\T,\F)$ of an abelian category to
  be a torsion pair, it suffices that $\Hom(X,Y)=0$ for all $X\in\T$
  and $Y\in\F$, and that every object $X$ fits into an exact
  sequence $0\to X'\to X\to X''\to 0$ with $X'\in\T$ and $X''\in\F$.

  Any morphism in $\bar\A$ between objects in $\bar\C$ and $\bar\D$ is
  isomorphic to a morphism coming from $\A$, and we may assume it is a
  morphism between objects in $\C$ and $\D$.  The image of such a
  morphism is in $\C\cap\D$, so is zero in $\bar\A$.  For any $X\in\A$
  the sequence $0\to X'\to X\to X''\to 0$ with $X'\in\C$ and
  $X''\in\D$ induces the required exact sequence in $\bar\A$.
\end{proof}

\section{Artinian-by-noetherian objects}

Let $\A$ be an abelian category. We call an object $X$ in $\A$
\emph{artinian-by-noetherian} if it fits into an exact sequence
$0\to X'\to X\to X''\to 0$ such that $X'$ is noetherian and $X''$ is
artinian.

\begin{lem}\label{le:art-noeth}
  Let $\A$ be an abelian category and $0\to X\to Y\to Z\to 0$ an exact
  sequence such that $X$ is noetherian and $Z$ is artinian. Then $Y$
  has no subquotient that is an infinite coproduct of non-zero
  objects.
\end{lem}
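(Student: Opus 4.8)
The plan is to argue by contradiction, using the fact that a subquotient of $Y$ inherits the structure of being artinian-by-noetherian — or rather noetherian-by-artinian as set up here — and then show that such an object cannot contain an infinite coproduct. First I would reduce to the following cleaner statement: if $W$ is any object fitting into an exact sequence $0\to W'\to W\to W''\to 0$ with $W'$ noetherian and $W''$ artinian, then $W$ is not an infinite coproduct $\coprod_{i\in I}W_i$ of non-zero objects. The passage from this to the statement about subquotients is routine: a subquotient $V$ of $Y$ fits into $0\to X_0\to V\to Z_0\to 0$ where $X_0$ is a subquotient of $X$, hence noetherian, and $Z_0$ is a subquotient of $Z$, hence artinian; and an infinite coproduct of non-zero objects that occurs as a subquotient of $Y$ is in particular itself such a subquotient object, so it suffices to rule out that $V$ itself is such a coproduct.

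For the core claim, suppose $W=\coprod_{i\in I}W_i$ with $I$ infinite and each $W_i\neq 0$, and consider the composite $W'\hookrightarrow W\twoheadrightarrow W''$ together with the coproduct inclusions. The key step is to split $I$ according to how each summand meets $W'$. Since $W'$ is a subobject of $W=\coprod_i W_i$, for each finite subset $J\subseteq I$ the subobject $W'\cap(\coprod_{i\in J}W_i)$ gives an ascending chain, and since $W'$ is noetherian this chain stabilises: there is a finite $J_0$ with $W'\subseteq \coprod_{i\in J_0}W_i$ — here I would use that in a (locally small, well-powered) abelian category a noetherian subobject of a coproduct factors through a finite subcoproduct, which follows from the noetherian condition applied to the intersections with the finite subcoproducts (the coproduct inclusions being monomorphisms whose union is all of $W$). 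Then for $i\notin J_0$ the summand $W_i$ maps injectively into $W''$ under the quotient map, because $W_i\cap W'\subseteq W_i\cap\coprod_{j\in J_0}W_j=0$. Hence $W''$ contains a subobject isomorphic to $\coprod_{i\in I\smallsetminus J_0}W_i$, which is an infinite coproduct of non-zero objects — contradicting that $W''$ is artinian, since an infinite coproduct $\coprod_{k\ge 1}U_k$ of non-zero objects admits the strictly descending chain $\coprod_{k\ge n}U_k$.

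The main obstacle I expect is the categorical subtlety in the step ``a noetherian subobject of a coproduct factors through a finite subcoproduct.'' In $\Mod A$ this is immediate from finite generation, but in a general abelian category $\A$ one must be slightly careful: coproducts need not be exact, and $W'\cap(\coprod_{i\in J}W_i)$ must be interpreted as a pullback/intersection of subobjects of $W$. The cleanest route is to observe that the monomorphisms $\coprod_{i\in J}W_i\hookrightarrow W$, ranged over finite $J$, express $W$ as the directed union (filtered colimit along inclusions) of its finite subcoproducts, so $W'\cap(-)$ applied to this directed system has directed union equal to $W'\cap W=W'$; noetherianity of $W'$ forces this ascending chain of subobjects of $W'$ to terminate, giving the desired finite $J_0$. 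One should also note for the descending-chain argument in $W''$ that in any abelian category with the relevant coproducts, the partial coproducts $\coprod_{k\ge n}U_k$ do form a genuinely strictly descending chain of subobjects when the $U_k$ are non-zero, which is where artinianity is contradicted. No further finiteness hypotheses on $\A$ are needed beyond being abelian with countable coproducts, which is implicit in the statement.
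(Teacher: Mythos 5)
Your proof is correct and follows essentially the same route as the paper's: reduce to the case where the coproduct is the whole object, use noetherianity of the subobject to trap it in a finite subcoproduct (the paper's stabilising chain of pullbacks $X_n$ along the finite subcoproducts is exactly your chain $W'\cap\coprod_{i\in J}W_i$), and then use artinianity of the quotient to force the complementary summands to vanish. The only cosmetic difference is at the end, where the paper runs a dual stabilising chain of pushouts $Z_n$ while you embed $\coprod_{i\notin J_0}W_i$ into $W''$ and invoke the strictly descending chain of tails; and the delicate point you rightly flag --- that the intersections with the finite subcoproducts exhaust the noetherian subobject, an AB5-type fact --- is taken for granted in the paper's proof as well and is harmless in all of its applications, where $\A$ is a module category or a locally noetherian Grothendieck category.
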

\begin{proof}
  Any subquotient $Y'$ of $Y$ fits into an exact sequence
  $0\to X'\to Y'\to Z'\to 0$ such that $X'$ is noetherian and $Z'$ is
  artinian. Thus we may assume that $Y'=Y=\coprod_{i\in\bbN}Y_i$ and
  need to show that $Y_i=0$ for almost all $i$. For each $n\in\bbN$
  let $X_n$ denote the pull-back of $X\to Y\leftarrow \coprod_{i\le n}Y_i$
  and  $Z_n$  the push-out of $\coprod_{i\le n}Y_i\leftarrow Y\to Z$.
We obtain chains
\[X_0\rightarrowtail X_1\rightarrowtail X_2\rightarrowtail
  \cdots\rightarrowtail X\qquad\text{and}\qquad  Z\twoheadrightarrow\cdots\twoheadrightarrow
  Z_2\twoheadrightarrow Z_1\twoheadrightarrow Z_0
 \]
 which stabilise, say for $n\ge n_0$, so $X_{n_0}=X$ and $Z=Z_{n_0}$.
 This yields an induced exact sequence
 $0\to X\to \coprod_{i\le n_0}Y_i\to Z\to 0$ and therefore $Y_i=0$ for
 all $i> n_0$.
\end{proof}

The converse of the preceding lemma requires an assumption. Recall
that a Grothendieck category is \emph{locally noetherian} if there is
a generating set of noetherian objects.  This means in particular that
finitely generated and noetherian objects coincide. In addition we
assume that the injective envelope of every simple object is artinian.
Examples are the category of modules over a commutative noetherian
ring, or more generally the category of quasi-coherent sheaves on a
noetherian scheme. Also, modules over a noetherian algebra have this
property.

\begin{lem}
  Let $\La$ be an $A$-algebra such that $\La$ is noetherian as an
  $A$-module. Then the injective envelope of each simple $\La$-module is artinian.
\end{lem}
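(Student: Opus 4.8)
The plan is to reduce the statement about a noetherian $A$-algebra $\La$ to the known commutative case, namely that over a commutative noetherian ring the injective envelope of every simple module is artinian (this is essentially Matlis's result, already invoked via \cite[Lemma~2.4.19]{Kr2022} in the proof of Proposition~\ref{pr:equivalence}). The key observation is that a simple $\La$-module $S$ is annihilated by some maximal ideal of the center, or at least that the relevant injective envelope can be computed after passing to a suitable commutative noetherian base.

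First I would let $\frm=\Ann_A(S)$ for a simple $\La$-module $S$; since $\La$ is a finitely generated $A$-module and $S$ is a finitely generated $\La$-module, $S$ is a finitely generated $A$-module, hence of finite length as an $A$-module, so $\frm$ is a maximal ideal of $A$ and $S$ is supported only at $\frm$. Replacing $A$ by $A_\frm^\wedge$ and $\La$ by $\La\otimes_A A_\frm^\wedge$ does not change the category of $\frm$-torsion modules nor injective envelopes computed therein, so I may assume $A$ is a complete noetherian local ring with maximal ideal $\frm$ and residue field $k=A/\frm$, and that $S$ is $\frm$-torsion. Then I would take $E_A=E_A(k)$, the injective envelope of $k$ over $A$, which is artinian over $A$ by Matlis duality, and consider $E_\La\coloneqq\Hom_A(\La,E_A)$, which is an injective $\La$-module (since $\Hom_A(-,E_A)$ is exact and $\La$ is $A$-projective? — no, better: $\Hom_A(\La,-)$ sends injective $A$-modules to injective $\La$-modules, being right adjoint to the exact restriction functor).

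The main step is then to show that $E_\La$ is artinian as a $\La$-module, and that it contains an injective envelope of $S$. Artinianness follows because $E_\La=\Hom_A(\La,E_A)$ is a finitely generated $A$-module? No — $E_A$ is artinian over $A$, and $\La$ is a finitely generated $A$-module, so $\Hom_A(\La,E_A)$ is artinian over $A$ (a quotient of $E_A^n$ for some surjection $A^n\to\La$), hence artinian over $\La$. To see that $E_\La$ contains a copy of $S$ with the injective envelope of $S$ sitting inside it: by Matlis duality over $A$, applying $\Hom_A(-,E_A)$ to the surjection $\La\to S$ of $A$-modules gives an injection $S\cong\Hom_A(S,E_A)^{?}$ — more carefully, $\Hom_A(S,E_A)\cong\Hom_k(S,k)$ has the same $A$-length as $S$, and there is a natural map $S\to\Hom_A(\Hom_A(\La,E_A),?)$; the cleanest route is to note $\soc_\La(E_\La)\ne 0$ and any simple $\La$-submodule of $E_\La$ is $\frm$-torsion of finite $A$-length, then pick an injective envelope $E_\La(S)$ inside the injective module $E_\La$, which is then artinian as a submodule of the artinian module $E_\La$.

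The hard part will be the bookkeeping in reducing to the complete local case and correctly identifying that every simple $\La$-module embeds into $\Hom_A(\La,E_A)$ for the appropriate $\frm$ — in particular checking that $\soc_\La\Hom_A(\La,E_A)$ meets every isomorphism class of simple $\La$-module with central annihilator $\frm$, so that one genuinely obtains an injective envelope and not merely an injective overmodule. Once that is settled, artinianness is immediate since submodules of artinian modules are artinian. I would organise the write-up as: (1) $S$ has finite $A$-length, reduce to $A$ complete local; (2) $E_A(k)$ is artinian over $A$ (cite Matlis); (3) $\Hom_A(\La,E_A(k))$ is injective over $\La$ and artinian over $A$, hence over $\La$; (4) $S\hookrightarrow\Hom_A(\La,E_A(k))$, so $E_\La(S)$ is a submodule of an artinian module, done.
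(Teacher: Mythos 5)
Your strategy is essentially the paper's: the paper also takes $\Hom_A(\La,E(A/\frm))$, observes it is an artinian $\La$-module because $E(A/\frm)$ is artinian over $A$ and $\La$ is a finite $A$-module, and embeds $E(S)$ into it as a direct summand. The ``hard part'' you flag -- that $S$ really embeds into $\Hom_A(\La,E_A(k))$ rather than merely into some injective overmodule -- is not hard and is exactly where the adjunction you already invoked for injectivity does the work: $\Hom_\La(S,\Hom_A(\La,E(A/\frm)))\cong\Hom_A(S,E(A/\frm))\neq 0$ as soon as $S$ is a nonzero $A$-module supported at $\frm$, and any nonzero map out of a simple module is mono. (The paper packages this as: $\Hom_A(\La,E)\cong\coprod_\frm\Hom_A(\La,E(A/\frm))$ is an injective cogenerator, and a simple module maps nonzero into a single summand of the coproduct.) Two small corrections: applying $\Hom_A(-,E_A)$ to a surjection $A^n\to\La$ yields a \emph{submodule} of $E_A^n$, not a quotient -- which is what you want for artinianness anyway; and ``$S$ is finitely generated over $A$, hence of finite length'' is not a valid implication as written -- it is true here, but because $A/\Ann_A(S)$ embeds into the division ring $\End_\La(S)$ and acts invertibly, forcing $\Ann_A(S)$ to be maximal, not because of finite generation alone. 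The detour through $A_\frm^\wedge$ is unnecessary: everything can be done over $A$ itself, as the paper does.
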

\begin{proof}
We may assume that the ring $A$ is noetherian and then $E(A/\frm)$ is artinian
for every $\frm\in\Omega$; see  \cite[Lemma~2.4.19]{Kr2022}. It
follows that $\Hom_A(\La,E(A/\frm))$ is an artinian $\La$-module for
each $\frm$. Let $S$ be a simple $\La$-module. The
$\La$-module
\[\Hom_A(\La,E)\cong\coprod_{\frm\in\Omega}\Hom_A(\La,E(A/\frm))\]
is an injective cogenerator. Thus for some $\frm\in\Omega$ the injective
envelope $E(S)$ is a direct summand of $\Hom_A(\La,E(A/\frm))$ and
therefore artinian.
\end{proof}

Here is the converse of Lemma~\ref{le:art-noeth}.

\begin{lem}\label{le:subquot}
  Let $\A$ be a locally noetherian Grothendieck category and
 suppose that injective envelopes of simple objects are artinian. Let
 $X$ be an
  object having no subquotient that is an infinite coproduct of
  non-zero objects. Then $X$ fits into an exact sequence
  $0\to X'\to X\to X''\to 0$ such that $X'$ is noetherian and $X''$ is
  artinian.
\end{lem}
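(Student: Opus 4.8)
The plan is to build the noetherian subobject $X'$ as a maximal noetherian subobject and then show the quotient $X'' = X/X'$ is artinian.

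First I would establish that $X$ has a \emph{maximal} noetherian subobject. In a locally noetherian Grothendieck category the noetherian subobjects of $X$ are closed under finite sums, so they form a directed system; if this system had no maximal element one could produce a strictly increasing chain $X_1 \subsetneq X_2 \subsetneq \cdots$ of noetherian subobjects, and then the subquotient $\bigl(\bigcup_n X_n\bigr)/X_1$ would contain, via the successive quotients $X_{n+1}/X_n$, an infinite coproduct of non-zero objects (after passing to a suitable subquotient to split off the chain), contradicting the hypothesis on $X$. Some care is needed here: increasing unions of noetherian subobjects need not be noetherian, but one extracts from a non-stabilising chain an infinite coproduct of non-zero subquotients, which is exactly what is forbidden. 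So let $X'$ be a maximal noetherian subobject and put $X'' = X/X'$; note $X''$ still has no subquotient that is an infinite coproduct of non-zero objects, and crucially $X''$ has \emph{no} non-zero noetherian subobject (by maximality of $X'$, a non-zero noetherian subobject of $X''$ would pull back to a noetherian subobject of $X$ strictly larger than $X'$, since extensions of noetherian by noetherian are noetherian).

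Next I would show that an object $Y = X''$ with no subquotient an infinite coproduct of non-zero objects and no non-zero noetherian subobject must be artinian. The key is that such a $Y$ has finite \emph{socle}: each simple subobject is noetherian, so $Y$ has no simple subobject unless... wait — that would force $Y$ to have zero socle and hence $Y=0$ in a Grothendieck category. Indeed every non-zero object in a Grothendieck category has a non-zero subobject with a simple quotient, but to get a simple \emph{sub}object one uses that $Y$ is not noetherian itself only indirectly; the correct statement is that $Y$ has no non-zero noetherian subobject, in particular no simple subobject, hence $\soc Y = 0$, hence $Y = 0$. That shows $X = X'$ is already noetherian — which is too strong and wrong in general. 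The resolution: the maximal noetherian subobject $X'$ need not exist because increasing unions fail to be noetherian; instead I should take $X'$ to be a maximal \emph{artinian-complementing} subobject, or better, argue directly.

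Let me restructure: the right approach is to use the injectivity hypothesis. Since $X$ has no subquotient that is an infinite coproduct, in particular $X$ has finite socle $S_1 \oplus \cdots \oplus S_k$ with each $S_j$ simple. Let $I = E(S_1) \oplus \cdots \oplus E(S_k)$ be the injective envelope of the socle; by hypothesis each $E(S_j)$ is artinian, so $I$ is artinian. The inclusion $\soc X \hookrightarrow X$ extends to a map $X \to I$ since $I$ is injective, and this map is a monomorphism on the essential subobject $\soc X$... but it need not be a monomorphism on $X$. Here is the point: let $X'' \subseteq I$ be the image of $X \to I$ and $X' = \ker(X \to I)$. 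Then $X''$ is artinian as a subobject of the artinian object $I$. It remains to see $X'$ is noetherian. Now $X'$ has zero socle: any simple subobject $S$ of $X'$ is a simple subobject of $X$, hence contained in $\soc X$, hence maps injectively into $I$, contradicting $S \subseteq \ker(X\to I)$ — so $\soc X' = 0$, forcing $X' = 0$ unless... again zero socle forces $X'=0$, so $X$ itself is artinian, still too strong.

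So the genuine subtlety is that $X$ need \emph{not} have finite socle — only that it has no infinite coproduct subquotient, and a socle is a coproduct of simples that \emph{is} a subobject, hence the socle of any subquotient of $X$ is finite; in particular $\soc X$ is finite. The flaw above is the claim ``zero socle forces zero object'' applied to $X'$: that is true in a Grothendieck category only for the object itself being a subobject of an injective with essential socle, which we don't have. The hard part, and the real content of the lemma, is therefore this: given $X'$ with $\soc X' = 0$ and no infinite-coproduct subquotient, show $X'$ is noetherian — but that's false ($X'$ could be e.g. a non-finitely-generated torsion-free module). Hence the correct decomposition must put the \emph{noetherian} part as a subobject built from a Loewy-type filtration exhausting the artinian behaviour at infinity. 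I expect the actual proof reverses my naive attempt: one shows $X$ contains a noetherian subobject $X'$ such that $X/X'$ embeds in a finite sum of injective envelopes of simples (using that the socle of every subquotient is finite to bound how much ``artinian-at-top'' structure there is), and proving such $X'$ exists — by a careful transfinite construction choosing finitely generated subobjects to capture each new simple appearing in successive socles of quotients, and showing the process terminates because otherwise an infinite coproduct subquotient appears — is the main obstacle; the injectivity hypothesis is what makes $X/X'$ land inside an artinian object once $X'$ is large enough.
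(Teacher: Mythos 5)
Your proposal does not reach a proof: it is a sequence of attempts, each of which you yourself correctly identify as failing, followed by a forecast of what the argument should look like. The self-diagnosis is sound (maximal noetherian subobjects need not exist, an object with zero socle need not be zero, the map $X\to E(\soc X)$ need not be injective), and your closing guess --- successively enlarge the socle of a quotient by killing finitely generated subobjects, with termination forced by the no-infinite-coproduct hypothesis --- points in exactly the right direction. But none of it is carried out, so the actual content of the lemma is missing.

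The two ingredients you need are these. First, the hypothesis that injective envelopes of simple objects are artinian is used precisely to establish the criterion: an object of $\A$ is artinian if and only if its socle is essential and of finite length (an object with finite-length essential socle embeds into a finite coproduct of injective envelopes of simples, which is artinian by hypothesis). Second, the inductive step: if $X$ is not artinian then, since $\soc X$ has finite length by your own observation, the criterion forces $\soc X$ not to be essential, so there is a non-zero finitely generated subobject $V\subseteq X$ with $V\cap\soc X=0$; as $V$ is noetherian it has a maximal subobject $U$, and $X\to X/U$ has finitely generated kernel and induces a \emph{proper} inclusion $\soc X\to\soc(X/U)$. Iterating gives epimorphisms $X=X_0\to X_1\to X_2\to\cdots$ with strictly increasing socles; since $\varinjlim X_i$ is again a quotient of $X$, its socle, which contains $\varinjlim\soc X_i$, has finite length, so the process stops after finitely many steps at an artinian quotient $X''=X_i$, and $X'=\Ker(X\to X'')$ is a finite extension of finitely generated objects, hence noetherian. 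This is the paper's proof; note it is a finite induction, not a transfinite construction, precisely because the socle of every quotient of $X$ is finite.
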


\begin{proof}
  The assumption on $\A$ implies that an object is artinian if and
  only if its socle is an essential subobject and has finite length.

  We construct a noetherian subobject $X'\subseteq X$ as follows.
  Suppose that $X$ is not artinian. From our first observation it
  follows that  $\soc X\subseteq X$
  is not essential, so there is a finitely generated subobject
  $0\neq V\subseteq X$ with $V\cap\soc X=0$. Choose a maximal
  subobject $U\subseteq V$. Then the canonical map $X\to X/U$ induces
  a proper inclusion $\soc X\to \soc (X/U)$. If $X/U$ is not artinian,
  we proceed and obtain a sequence of epimorphisms
  $X=X_0\to X_1\to X_2\to\cdots$ such that $\Ker (X_i\to X_{i+1})$ is
  finitely generated and the induced map $\soc X_i\to\soc X_{i+1}$ is
  a proper inclusion for each $i\ge 0$. This must terminate after
  finitely many steps since
  $\varinjlim \soc X_i\subseteq\soc (\varinjlim X_i)$ has finite
  length, again by our assumption on $X$. Thus for some $i\ge 0 $ the
  quotient $X''=X_i$ is artinian, and $X'=\Ker (X\to X'')$ is finitely
  generated.
\end{proof}

\begin{prop}\label{pr:art-noeth}
  Let $\A$ be a locally noetherian Grothendieck category and suppose
  that injective envelopes of simple objects are artinian. Then the
  artinian-by-noetherian objects form a Serre subcategory in $\A$; it
  is the smallest Serre subcategory containing both, the artinian and
  the noetherian objects. Moroever, it consists of the objects having
  no subquotients that are infinite coproducts of non-zero objects.
\end{prop}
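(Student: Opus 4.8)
The plan is to put $\C=\noeth\A$ and $\D=\art\A$, so that the artinian-by-noetherian objects are exactly the objects of $\C\ast\D$. Both $\C$ and $\D$ are Serre subcategories of $\A$, since the noetherian objects and the artinian objects are each closed under subobjects, quotients, and extensions. Hence Lemma~\ref{le:ext-cat} applies and gives, for free, that $\C\ast\D$ is closed under subobjects and quotient objects. To conclude that $\C\ast\D$ is a Serre subcategory it therefore remains only to establish closure under extensions, and by Lemma~\ref{le:ext-cat} this reduces to checking that $\C=\noeth\A$ is \emph{right cofinal} in $\A$.

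I would verify right cofinality directly from local noetherianity. Given an epimorphism $\pi\colon X\to Y$ with $Y$ noetherian, write $X$ as the directed union of its noetherian subobjects $X_i$; then $Y=\pi(X)$ is the directed union of the subobjects $\pi(X_i)\subseteq Y$, and since $Y$ is noetherian this union stabilises, so that $Y=\pi(X_{i_0})$ for some $i_0$. Then $\pi$ restricts to an epimorphism $X_{i_0}\to Y$ with $X_{i_0}\in\noeth\A$ that factors through $\pi$. Granting this, $\C\ast\D$ is a Serre subcategory of $\A$. It contains $\art\A$ (take the subobject to be $0$) and $\noeth\A$ (take the quotient to be $0$); conversely, any Serre subcategory of $\A$ containing both is closed under extensions and so contains every extension of an artinian object by a noetherian one, that is, all of $\C\ast\D$. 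Thus $\C\ast\D$ is the smallest Serre subcategory containing the artinian and the noetherian objects.

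For the final assertion I would simply combine the two preceding lemmas: Lemma~\ref{le:art-noeth} shows that an artinian-by-noetherian object has no subquotient that is an infinite coproduct of non-zero objects, while Lemma~\ref{le:subquot} — which is where the standing hypotheses that $\A$ be locally noetherian with artinian injective envelopes of simple objects are actually used — gives the converse. So within this proposition the only genuine verification is the right-cofinality check above; the substantive content, namely the subquotient characterization, has already been dispatched in Lemmas~\ref{le:art-noeth} and~\ref{le:subquot}, which I regard as the real obstacle.
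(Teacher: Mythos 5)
Your proof is correct and follows essentially the same route as the paper: Lemma~\ref{le:ext-cat} applied with $\C=\noeth\A$ and $\D=\art\A$ for the Serre-subcategory claim, and Lemmas~\ref{le:art-noeth} and~\ref{le:subquot} for the subquotient characterisation. The only difference is that you spell out the right cofinality of $\noeth\A$ (writing $X$ as the directed union of its noetherian subobjects and using that the images in the noetherian quotient stabilise), which the paper asserts without proof; your verification of it is correct.
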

\begin{proof}
It follws from Lemma~\ref{le:ext-cat} that the  artinian-by-noetherian
objects form a Serre subcategory, because the noetherian objects are
right cofinal in $\A$. The other description of the   artinian-by-noetherian
objects follows from Lemmas~\ref{le:art-noeth} and \ref{le:subquot}.
\end{proof}

Writing an object as an extension of an artinian and a noetherian
object is unique up to objects of finite length. We make this more
precise, using the language of quotient categories.

We write $\arno\A$ for the full subcategory consisting of all
artinian-by-noetherian objects in $\A$.\footnote{This is an abuse of
  notation, as Auslander introduces in \cite{Au1978} the same notation
  for modules that are extensions of noetherian by artinian
  modules. For instance, almost split sequences over noetherian
  algebras provide such extensions.} As usual, $\art\A$, $\noeth\A$,
and $\fl\A$ denote the full subcategories of objects in $\A$ that are
artinian, noetherian, and finite length, respectively.

Let us record the basic properties of $\arno\A$; they are immediate
consequences of  Lemmas~\ref{le:arno} and \ref{le:torsion}.

\begin{prop}\label{pr:torsion}
  The categories $\noeth\A$ and $\art\A$ yield a pair of full subcategories
  \[\left(\frac{\noeth\A}{\fl\A},\frac{\art\A}{\fl\A}\right)\] of $(\arno\A)/(\fl\A)$ which form a
  torsion pair.\qed
\end{prop}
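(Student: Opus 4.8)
The plan is to deduce Proposition~\ref{pr:torsion} directly from the general results about categories of extensions established in Section~4, by instantiating $\C$ and $\D$ appropriately. Specifically, I would set $\C\coloneqq\noeth\A$ and $\D\coloneqq\art\A$, both of which are Serre subcategories of the abelian category $\A$ (the noetherian objects are closed under subobjects, quotients, and extensions since $\A$ is locally noetherian; dually for artinian objects). By Proposition~\ref{pr:art-noeth} we know that $\arno\A=\C\ast\D$ is a Serre subcategory, and one checks immediately that $\C\cap\D=\art\A\cap\noeth\A=\fl\A$, since an object is simultaneously artinian and noetherian precisely when it has finite length.

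With these identifications in place, the statement becomes a special case of Lemma~\ref{le:torsion}, which asserts exactly that $\bigl(\C/(\C\cap\D),\,\D/(\C\cap\D)\bigr)$ is a torsion pair in $(\C\ast\D)/(\C\cap\D)$. Translating back through the dictionary $\C=\noeth\A$, $\D=\art\A$, $\C\cap\D=\fl\A$, $\C\ast\D=\arno\A$, this is precisely the claim that
\[
  \left(\frac{\noeth\A}{\fl\A},\frac{\art\A}{\fl\A}\right)
\]
is a torsion pair in $(\arno\A)/(\fl\A)$. So the proof is essentially a one-line invocation: \emph{Apply Lemma~\ref{le:torsion} with $\C=\noeth\A$ and $\D=\art\A$, noting $\C\cap\D=\fl\A$ and $\C\ast\D=\arno\A$ by Proposition~\ref{pr:art-noeth}.} This matches the remark preceding the proposition that the result is an immediate consequence of Lemmas~\ref{le:arno} and \ref{le:torsion}.

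There is essentially no obstacle here; the content has all been front-loaded into the abstract Section~4 and into Proposition~\ref{pr:art-noeth}. The only things worth a moment's care are: (i) verifying that $\noeth\A$ and $\art\A$ really are Serre subcategories of $\A$ in the required sense — closure under subobjects, quotients, and extensions — which holds under the standing hypotheses (local noetherianity makes noetherian objects well-behaved, and the artinian objects are always a Serre subcategory); and (ii) the identification $\art\A\cap\noeth\A=\fl\A$, which is the standard fact that an object is artinian and noetherian iff it admits a finite composition series. Neither point requires the hypothesis that injective envelopes of simple objects are artinian — that hypothesis was only needed in Proposition~\ref{pr:art-noeth} to know that $\arno\A$ is itself a Serre subcategory (via Lemmas~\ref{le:art-noeth} and \ref{le:subquot}), but once we are allowed to cite that proposition, the torsion-pair statement is purely formal. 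One should also observe that the fully faithfulness of the inclusions $\noeth\A/\fl\A\hookrightarrow(\arno\A)/\fl\A$ and $\art\A/\fl\A\hookrightarrow(\arno\A)/\fl\A$, implicit in the phrase ``pair of full subcategories,'' is precisely the cofinality computation carried out inside the proof of Lemma~\ref{le:arno} and reused in Lemma~\ref{le:torsion}, so nothing new is needed.

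\begin{proof}
  Apply Lemma~\ref{le:torsion} with $\C=\noeth\A$ and $\D=\art\A$. These
  are Serre subcategories of $\A$, and $\C\cap\D=\fl\A$ since an object
  is both artinian and noetherian precisely when it has finite length.
  By Proposition~\ref{pr:art-noeth} we have $\C\ast\D=\arno\A$. Hence
  Lemma~\ref{le:torsion} gives the asserted torsion pair
  $\bigl(\noeth\A/\fl\A,\ \art\A/\fl\A\bigr)$ in
  $(\arno\A)/(\fl\A)$.
\end{proof}
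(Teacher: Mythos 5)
Your proof is correct and is exactly the paper's intended argument: the paper states that Proposition~\ref{pr:torsion} is an immediate consequence of Lemma~\ref{le:torsion} applied with $\C=\noeth\A$, $\D=\art\A$, $\C\cap\D=\fl\A$, and $\C\ast\D=\arno\A$ (the last identity being the definition of $\arno\A$ rather than a consequence of Proposition~\ref{pr:art-noeth}, but this is a cosmetic point). Nothing further is needed.
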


\begin{prop}\label{pr:arno}
  \pushQED{\qed}
  The inclusions $\art\A\to\arno\A$ and  $\noeth\A\to\arno\A$ induce equivalences
\[\frac{\art\A}{\fl\A}\longiso\frac{\arno\A}{\noeth\A}\qquad\text{and}\qquad
\frac{\noeth\A}{\fl\A}\longiso\frac{\arno\A}{\art\A}.\qedhere\]
\end{prop}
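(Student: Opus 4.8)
The plan is to observe that Proposition~\ref{pr:arno} is an immediate specialization of Lemma~\ref{le:arno} once the right substitutions for $\C$, $\D$ are made, together with the identification of $\C\cap\D$ established in the previous section. Concretely, I would set $\C=\noeth\A$ and $\D=\art\A$ inside the ambient abelian category $\A$ (or, harmlessly, inside $\arno\A$ itself). Both are Serre subcategories of $\A$: this is standard and in any case $\arno\A$ is itself abelian by Proposition~\ref{pr:art-noeth}, so we may work there. The key numerical input is the identity $\noeth\A\cap\art\A=\fl\A$, i.e.\ an object is both noetherian and artinian precisely when it has finite length; this is elementary (a module/object that is both noetherian and artinian admits a finite composition series, and conversely).

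With these identifications in place, Lemma~\ref{le:arno} applied to the pair $(\C,\D)=(\noeth\A,\art\A)$ gives exactly the two asserted equivalences: the inclusion $\C\to\C\ast\D$ induces
\[\frac{\noeth\A}{\fl\A}=\frac{\C}{\C\cap\D}\longiso\frac{\C\ast\D}{\D}=\frac{\arno\A}{\art\A},\]
and the inclusion $\D\to\C\ast\D$ induces
\[\frac{\art\A}{\fl\A}=\frac{\D}{\C\cap\D}\longiso\frac{\C\ast\D}{\C}=\frac{\arno\A}{\noeth\A}.\]
Here one uses that, by definition, $\arno\A=\noeth\A\ast\art\A=\C\ast\D$ (note the order of the factors: $X'$ noetherian, $X''$ artinian). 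That is the whole proof; hence the one-line statement and the \qedhere in the environment.

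The only point that deserves a word of care — and this is the "main obstacle", though it is a very mild one — is making sure the hypotheses of Lemma~\ref{le:arno} are genuinely met, namely that $\C$ and $\D$ are Serre subcategories of a common abelian category and that $\C\ast\D$ is itself abelian so that the quotient categories on the right-hand sides make sense. Both are supplied by earlier results: Serre-ness of $\art\A$ and $\noeth\A$ is classical, and Lemma~\ref{le:ext-cat} (or Proposition~\ref{pr:art-noeth}) guarantees that $\arno\A$ is abelian. One should also double-check the matching of sides in the two displays, since $\C\ast\D$ is not symmetric in $\C$ and $\D$: modding out the noetherian part $\C$ leaves the artinian quotient, matching $\arno\A/\noeth\A\simeq\art\A/\fl\A$, and symmetrically for the other equivalence. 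Given all this, the statement follows formally, which is why it is recorded with \pushQED{\qed} and no separate proof block.
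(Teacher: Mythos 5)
Your proposal is correct and is exactly the paper's argument: the author states just before Propositions~\ref{pr:torsion} and~\ref{pr:arno} that they are immediate consequences of Lemmas~\ref{le:arno} and~\ref{le:torsion}, i.e.\ one applies Lemma~\ref{le:arno} with $\C=\noeth\A$, $\D=\art\A$, $\C\cap\D=\fl\A$, and $\C\ast\D=\arno\A$. Your check that the two sides match under the asymmetric convention for $\C\ast\D$ is the right (and only) point of care, and you resolved it correctly.
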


\section{Matlis reflexive modules}

We keep the assumption that the commutative ring $A$ is noetherian. It
is convenient to call an $A$-module
\emph{complete} if the $A$-action factors through the canonical map
$A\to \widehat A$. Clearly, any Matlis dual module is complete. For
that reason we restrict ourselves to the study of complete
$A$-modules, which identify with $\widehat A$-modules.  This leads to
a characterisation of Matlis reflexive modules as extensions of
artinian by noetherian modules, which is well known when the ring $A$
is local \cite{En1984,Zi1974, Zo1983}.

\begin{prop}\label{pr:reflexive}
  For a locally finitely supported $\widehat A$-module $M$ the following are equivalent.
  \begin{enumerate}
  \item $M$ is Matlis reflexive.
  \item $M$ is linearly compact.  
  \item $M$ has no subquotient that is an infinite
    direct sum of non-zero modules.
  \item $M$ has a noetherian submodule $U$ such that  $M/U$ is  artinian.
  \end{enumerate}
\end{prop}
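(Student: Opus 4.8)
The plan is to establish the equivalences by running a cycle, leaning on the machinery already in place. First I would prove (1)$\Rightarrow$(2). Matlis reflexivity means the evaluation map $M\to(M^\vee)^\vee$ is an isomorphism; dualising a codirected family of submodules $M_i\subseteq M$ and using that $\Hom_A(-,E)$ is exact and sends colimits to limits, one sees that $\varprojlim M_i$ is computed as the dual of a colimit of quotients of $M^\vee$, and the surjection $M\to\varprojlim M_i$ is obtained by dualising an injection into $M^\vee$. Concretely, from $0\to M_i\to M\to M/M_i\to 0$ one gets $0\to(M/M_i)^\vee\to M^\vee$, these maps assemble to $\varinjlim(M/M_i)^\vee\to M^\vee$ which is injective since filtered colimits are exact, and applying $(-)^\vee$ together with reflexivity of $M$ gives the epimorphism $M\cong(M^\vee)^\vee\to(\varinjlim(M/M_i)^\vee)^\vee\cong\varprojlim(M/M_i)^{\vee\vee}$; one identifies the target with $\varprojlim M/M_i$ using that each $M/M_i$ is reflexive (a quotient of the reflexive module $M$, by Theorem~\ref{th:KS}), so $M$ is linearly compact.

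Next, (2)$\Rightarrow$(3): a linearly compact module lies in a Serre subcategory (the cited results of Zelinsky), so any subquotient is again linearly compact; but an infinite direct sum $\coprod_{i\in I}N_i$ of non-zero modules is not linearly compact, since the cofinite-support submodules $M_J=\coprod_{i\notin J}N_i$ form a codirected family with $\varprojlim M_J=\prod_{i\in I}N_i$ and the canonical map from the direct sum is not surjective. For (3)$\Rightarrow$(4) I would invoke the abelian-category machinery of Section~5: the category of locally finitely supported $\widehat A$-modules is a locally noetherian Grothendieck category, and injective envelopes of simple modules are artinian (they are the $E(A/\frm)$, artinian over the semilocal localisation by \cite[Lemma~2.4.19]{Kr2022}), so Lemma~\ref{le:subquot} applies verbatim and produces the noetherian submodule $U$ with $M/U$ artinian.

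Finally (4)$\Rightarrow$(1): if $U\subseteq M$ is noetherian with $M/U$ artinian, then by Proposition~\ref{pr:equivalence} both $U$ and $M/U$ are Matlis reflexive — a noetherian $\widehat A$-module and an artinian $A$-module are each reflexive, being in the image of one of the quasi-inverse equivalences — and Matlis reflexive modules form a Serre subcategory by Theorem~\ref{th:KS}, so $M$ is reflexive. The main obstacle I anticipate is the careful bookkeeping in (1)$\Rightarrow$(2): one must check that the limit $\varprojlim M/M_i$ taken in the module category agrees with what the double-dual computation produces, which requires knowing that each $M/M_i$ is reflexive so that $(M/M_i)^{\vee\vee}\cong M/M_i$ naturally, and that the colimit $\varinjlim(M/M_i)^\vee$ over the directed poset (opposite to the codirected index set) really does inject into $M^\vee$ — this last point is where exactness of filtered colimits in $\widehat A$-modules is used. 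Everything else reduces to citing the Serre-subcategory properties and the structural lemmas already proved.
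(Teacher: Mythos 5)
Your proposal follows the same cycle (1)$\Rightarrow$(2)$\Rightarrow$(3)$\Rightarrow$(4)$\Rightarrow$(1) as the paper, with essentially the same argument at each step: the paper phrases (2)$\Rightarrow$(3) via the coset definition of linear compactness, but that is exactly your observation that $\coprod_{i}N_i\to\prod_{i}N_i$ is not surjective, and your direct appeal to Lemma~\ref{le:subquot} for (3)$\Rightarrow$(4) is a harmless shortcut past the paper's reduction to finite support. The only blemish is notational: in (2)$\Rightarrow$(3) you want $M_J=\coprod_{i\in J}N_i$ for cofinite $J$ and the computation $\varprojlim M/M_J\cong\prod_{i\in I}N_i$, not $\varprojlim M_J$.
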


\begin{proof}
  (1) $\Rightarrow$ (2)
  %This follows from Lemma~\ref{le:Matlis}.
Given a codirected family of submodules $M_i\subseteq M$, the canonical
map $\varinjlim (M/M_i)^\vee\to M^\vee$ is a momomorphism. Thus
\[M\cong (M^\vee)^\vee\lto (\varinjlim(M/M_i)^\vee)^\vee
  \cong\varprojlim ((M/M_i)^\vee)^\vee \cong\varprojlim M/M_i\]
is an epimorphism.

(2) $\Rightarrow$ (3) Any subquotient of a linearly compact module is
again linearly compact. Thus we may assume that there is a
decomposition $M=\bigoplus_{i\in I}M_i$. The
cofinite subsets $J\subseteq I$ yield a codirected family of
submodules $M_J=\bigoplus_{i\in J}M_i$ of $M$. Choose $x_i\in M_i$
for each $i\in I$ and set $x_J=\sum_{i\not\in J}x_i$. For any finite
set of cofinite subsets $J_\a\subseteq I$ we have $x_J\in\bigcap_\a
x_{J_\a}+M_{J_\a}$ for $J=\bigcap_\a J_\a$. Thus there exists  $x\in \bigcap_J
x_{J}+M_{J}$ when $J$ runs through all cofinite subsets. This satisfies $x+\bigoplus_{j\neq i}M_j= x_i+\bigoplus_{j\neq
  i}M_j$ for all $i\in I$, and therefore $x_i=0$ for almost all $i$.

(3) $\Rightarrow$ (4) Suppose $M$ has no subquotient that is an
  infinite direct sum of non-zero modules. Then $M$ has finite support
  and  therefore it may be viewed as a module over a factor of $\widehat A$ which
  is noetherian.  Thus we can apply Lemma~\ref{le:subquot}.

  (4) $\Rightarrow$ (1) Artinian and noetherian $\widehat A$-modules
  are reflexive, by Proposition~\ref{pr:equivalence}. As reflexive
  modules are closed under extensions, by Theorem~\ref{th:KS}, it
  follows that $M$ is reflexive.
\end{proof}

We write $\refl\widehat A$ for the category of Matlis reflexive
$\widehat A$-modules
and $\fl\widehat A$ for the category of finite length $\widehat A$-modules. Then we
obtain from Proposition~\ref{pr:torsion} the torsion pair
\[\left(\frac{\noeth\widehat A}{\fl\widehat A},\frac{\art\widehat A}{\fl\widehat A}\right)\] in
$(\refl\widehat A)/(\fl\widehat A)$ and from Proposition~\ref{pr:arno} the
pair of equivalences
\[\frac{\art\widehat A}{\fl\widehat A}\longiso\frac{\refl\widehat
    A}{\noeth\widehat A}\qquad\text{and}\qquad \frac{\noeth\widehat
    A}{\fl\widehat A}\longiso\frac{\refl\widehat A}{\art\widehat A}.\]

\section{Examples}

We provide descriptions of Matlis reflexive modules for some small
examples. The first example illustrates why one passes
from a noetherian ring $A$ to the completion $\widehat A$. Because of the
Krull--Schmidt property, it suffices to list the indecomposbable
modules. Moreover, any indecomposable $\widehat A$-module is the
restriction of a module over the completion $A^\wedge_\frm$ for some
maximal ideal $\frm$ of $A$. Thus it suffices to consider rings that
are local and complete.

\begin{exm}
  Let $A= \bbZ$. It is easily checked that a $\bbZ$-module is Matlis reflexive if and only if it
  has finite length.
\end{exm}

\begin{exm}\label{ex:dedekind}
  Let $A$ be a Dedekind domain. Passing from $A$ to $\widehat A$, we may
  assume that $A$ is a complete discrete valuation ring, so local and
  complete of Krull dimension one. By inspection it is easily checked
  that an indecomposable $A$-module is Matlis reflexive if and only if
  it is pure-injective.  The following is the complete list of
  indecomposable Matlis reflexive modules, where $\frm$ denotes the
  maximal ideal:
  \[A=\varprojlim A/\frm^i\,,\quad E(A/\frm)=\varinjlim
    A/\frm^i\,,\quad Q(A)\,,\quad A/\frm^i\;\; (i\in\bbN).\] The
  category $\refl A=\arno A$ is an abelian category with enough
  projective and enough injective objects. Note that the quotient ring
  $Q=Q(A)$ is the unique indecomposable object that is projective and
  injective.  Set $\bar A= \smatrix{A&Q\\ 0&Q}$ and
  $T_2(Q)=\smatrix{Q&Q\\ 0&Q}$. Then $\Hom_A(A\oplus Q,-)$ induces an
  equivalence $\refl A\iso\mod\bar A$, and
  passing to the quotient modulo $\fl A$  yields equivalences
\[\frac{\refl A}{\fl A}\simeq\mod T_2(Q)\qquad
  \text{and}\qquad \frac{\noeth  A}{\fl A}\simeq \mod Q\simeq \frac{\art  A}{\fl A}.\]
\end{exm}

\begin{exm}
  Let $k$ be a field and $A=k[x,y]/(xy)$. There are two minimal prime
  ideals $\mathfrak x=(x)/(xy)$ and $\mathfrak y=(y)/(xy)$, with
  $A/\mathfrak x\cong k[y]$ and $A/\mathfrak y\cong k[x]$. For the
  maximal ideal $\frn=(x,y)/(xy)$ we have $A/\frn\cong k$.  Thus the
  ring $A$ fits into a pullback diagram:
  \[
    \begin{tikzcd}[column sep=small,row sep=small]
      &k[x]\arrow[rd]\\
      A\arrow[ru]\arrow[rd]&&k\\
      &k[y] \arrow[ru]
    \end{tikzcd}
  \]
  In particular, $\Spec A$ is given by two copies of an affine line,
  glued via the distinguished maximal ideal $\frn$:
  \[\Spec A=\Spec k[x]\amalg_{\frn}\Spec k[y]\] For the set
  of maximal ideals we have
  \[\Om_A= (\Om_{k[x]}\smallsetminus\{(x)\})\cup \{\frn\}\cup (\Om_{k[y]}\smallsetminus\{(y)\}).\]

  For $\frm\in\Om_A$ and $\frm\neq\frn$, the ring
  $A_\frm^\wedge\cong \End_A(E(A/\frm))$ is a complete discrete
  valuation ring, because $E(A/\frm)$ is annihilated either by
  $\mathfrak x$ or by $\mathfrak y$ and identifies therefore with an
  indecomposable injective module over a polynomial ring.  In
  particular, the description of the Matlis reflexive modules over
  $A_\frm^\wedge$ follows from the preceding example.

 For  $\frm=\frn$ we have $A^\wedge_\frm\cong k\llbracket
  x,y\rrbracket/(xy)$ and we refer to the next example for a
  description of its Matlis reflexive modules.

  The algebra $A$ is a string algebra. Crawley-Boevey offers in
  \cite{CB2018} a detailed discussion of modules over string algebras,
  including a classification of the artinian modules, and using the
  functorial filtration method, which goes back to Gelfand and
  Ponomarev \cite{GP1968}. Note that $\art A=\art\widehat A$; so one may compare our
  description with the one given in \cite[Theorem~1.3]{CB2018}.
\end{exm}

\begin{exm}
  Let $k$ be a field and $A=k\llbracket x,y\rrbracket/(xy)$. This ring is a
  completion of $k[x,y]/(xy)$ and we have therefore an analogous  pullback diagram:
  \[
    \begin{tikzcd}[column sep=small,row sep=small]
      &k\llbracket x\rrbracket\arrow[rd]\\
      A\arrow[ru]\arrow[rd]&&k\\
      &k\llbracket y\rrbracket \arrow[ru]
    \end{tikzcd}
  \]
  The following is the complete list of indecomposable Matlis
  reflexive $A$-modules:
\begin{enumerate}
 \item string modules $M(C)$ given by finite or stabilising strings $C$, 
 \item band modules $M(C,V)$ given by periodic strings $C$ and
   indecomposable $k[t,t^{-1}]$-modules $V$ of finite length.
 \end{enumerate}
 This list can be deduced from work of Ebrahimi~Atani \cite{At2000},
 though our notation and terminology follows more closely that of
 \cite{CB2018}. For string modules given by infinite strings, see also
 Ringel \cite{Ri1995}.  The isomorphism type of a string
 or band module is essentially determined by the corresponding string
 $C$ and the isomorphism type of the $k[t,t^{-1}]$-module $V$; see
 \cite{CB2018} for details.
 
A \emph{string} is a possibly infinite word in the alphabet
$\{x,y,x^-,y^-\}$ having no subword of the form $xy$, $yx$, $x^-y^-$,
$y^-x^-$. We identify strings with diagrams where the letters are
represented by arrows. Given a string $C$, we write $C^-$ for the
string which is obtained from $C$ by changing each letter $a$ in $C$
to $a^-$, where $(a^-)^-=a$.  A string is either finite or infinite,
and for infinite strings we distinguish between type $\bbN$ and type
$\bbZ$.  A string $C$ can be written as a sequence $(C_i)_{i\in I}$
indexed by an interval $I\subseteq\bbZ$.  We say that an infinite
string $C$ is \emph{stabilising} if $C_i=C_{i+1}$ for $|i|\gg 0$. We
say that $C$ is \emph{periodic} if there exists some $n>1$ such that
$C_i=C_{i+n}$ for all $i\in\bbZ$. The
minimal such $n$ is called the \emph{period}.

Let us define the \emph{string module} $M(C)$ over $A$ for a string
$C$ that is finite or stabilising. For finite $C$ the module $M(C)$ has
a $k$-basis that is indexed by the vertices of $C$ and the arrows show
the actions of $x$ and $y$.  For an infinite string  we use truncations
and note that they are again stabilising or finite. So for $C$ of the form
\[
  \begin{tikzcd}[column sep=15pt]
    \cdots \arrow[r,"a"]&\circ_2 \arrow[r,"a"] & \circ_1\arrow[r,"a"] &
    \circ_0\arrow[r,dash]&\circ\arrow[r,dash]&\cdots
 \end{tikzcd}
\]
(with $a=x$ or $a=y$) one takes the direct limit of the string modules given by the truncations
\[
  \begin{tikzcd}[column sep=15pt]
    \circ_i\arrow[r,"a"] &\cdots \arrow[r,"a"] & \circ_1\arrow[r,"a"] &
    \circ_0\arrow[r,dash]&\circ\arrow[r,dash]&\cdots
 \end{tikzcd}
\]
and for a string of the form
\[
  \begin{tikzcd}[column sep=15pt]
    \cdots&\circ_2\arrow[l,swap,"a"] & \circ_1\arrow[l,swap,"a"] &
    \circ_0\arrow[l,swap,"a"] \arrow[r,dash]&\circ\arrow[r,dash]&\cdots
   \end{tikzcd}
 \]  
 one takes  the inverse limit of the string modules given by the truncations
\[
  \begin{tikzcd}[column sep=15pt]
   \circ_i& \cdots\arrow[l,swap,"a"] & \circ_1\arrow[l,swap,"a"] &
    \circ_0\arrow[l,swap,"a"] \arrow[r,dash]&\circ\arrow[r,dash]&\cdots
   \end{tikzcd}
 \]
 The definition of $M(C)$ depends on choices, but it can be shown that the isomorphism type of the
  $A$-module $M(C)$ is independent of any choices. For this one uses
  the observation that any
 concatenation $C=C'C''$ via an arrow from $C''$ to $C'$ yields an
 exact sequence
 \[0\lto M(C')\lto M(C)\lto M(C'')\lto 0, \] and there is an obvious
 swap when the connecting arrow goes in the other direction.

There are two distinguished strings:
\[
  \begin{tikzcd}[column sep=15pt]
    C_\infty\colon&\cdots \arrow[r,"x"]&\circ \arrow[r,"x"]
     & \circ \arrow[r,"x"] & \circ &
      \circ \arrow[l,swap,"y"] &\circ
      \arrow[l,swap,"y"]&\cdots \arrow[l,swap,"y"] 
    \end{tikzcd}
  \]
  and  
 \[ \begin{tikzcd}[column sep=15pt]
      D_\infty\colon&\cdots&\circ\arrow[l,swap,"x"] & \circ
      \arrow[l,swap,"x"] & \circ\arrow[l,swap,"x"] \arrow[r,"y"]&
      \circ \arrow[r,"y"] & \circ\arrow[r,"y"] &\cdots
    \end{tikzcd}
\]  
Let $\frm$ denote the maximal ideal of $A$. We claim that
$M(C_\infty)\cong E(A/\frm)$. In fact, we have for $i\ge 1$
\[\soc^i E(A/\frm)=\Hom_A(A/\frm^i, E(A/\frm)\cong M(C_{i-1})\]
with
  \[ \begin{tikzcd}[column sep=15pt] C_i\colon&\circ_i \arrow[r,"x"]
      &\cdots \arrow[r,"x"] & \circ_1 \arrow[r,"x"] & \circ_0 &
      \circ_1 \arrow[l,swap,"y"] & \cdots \arrow[l,swap,"y"] &\circ_i
      \arrow[l,swap,"y"]
    \end{tikzcd} \]
  and therefore
  \[M(C_\infty)=\varinjlim M(C_i) \cong \varinjlim\soc^i
    E(A/\frm)=E(A/\frm).\] For a finite string $C$ we have
  $M(C)^\vee\cong M(C^-)$, and therefore
\[M(D_\infty)=\varprojlim M(C^-_i)\cong
  E(A/\frm)^\vee=\Hom_A(E(A/\frm),E(A/\frm))\cong A.\]

For any finite or stabilising string $C$, the $A$-module $M(C)$ is an
extensions of an artinian by a noetherian module; see
Lemma~\ref{le:arno-ex} below.  In particular, $M(C)$ is
Matlis reflexive.  Note that $M(C)$ is artinian if and only if $C$
contains no infinite substring of the form
  \[ \begin{tikzcd}[column sep=15pt] \cdots&\circ\arrow[l,swap,"x"] &
      \circ\arrow[l,swap,"x"] & \circ\arrow[l,swap,"x"]
   \end{tikzcd}
   \quad\text{or}\quad
 \begin{tikzcd}[column sep=15pt] 
   \circ\arrow[r,"y"]& \circ\arrow[r,"y"] & \circ\arrow[r,"y"] &\cdots
    \end{tikzcd}
  \]
Analogously,  $M(C)$ is noetherian if and only if $C$ contains no
infinite substring of the form
\[ \begin{tikzcd}[column sep=15pt] \cdots
    \arrow[r,"x"]&\circ\arrow[r,"x"] & \circ\arrow[r,"x"] & \circ
 \end{tikzcd}
   \quad\text{or}\quad
   \begin{tikzcd}[column sep=15pt] \circ& \circ\arrow[l,swap,"y"]
     &\circ\arrow[l,swap,"y"]&\cdots \arrow[l,swap,"y"]
   \end{tikzcd} \]

 Now consider an infinite string $C$ of period $n$ and set
 $R=k[t,t^{-1}]$.  The string module $M(C)$ has a $k$-basis
 $\{b_i\mid i\in\bbZ\}$ that is indexed by the vertices of the diagram
 representing $C$, and the action of $x$ and $y$ is given by the
 arrows of this diagram. Then $M(C)$ is a free $R$-module of rank $n$
 with the action of $t$ given by $tb_i =b_{i+n}$. For any $R$-module
 $V$ one defines the \emph{band module} $M(C,V)=V\otimes_R M(C)$.

For the quotient category $(\refl A)/(\fl A)$ one observes that each
indecomposable object is either annihilated by $x$ or by $y$. Thus the
description reduces to modules over $k\llbracket x\rrbracket$ or
$k\llbracket y\rrbracket$, and we get an equivalence
 \[\frac{\refl A}{\fl A}\simeq\mod T_2(k(\!(x)\!))\times \mod T_2(k(\!(y)\!)).\]
\end{exm}

The following lemma has been used in the preceding example.

\begin{lem}\label{le:arno-ex}  
 Let $A=k\llbracket x,y\rrbracket/(xy)$. For a finite or stabilising string $C$
 the  module $M(C)$ is artinian-by-noetherian over $A$. Moreover,  $M(C)^\vee\cong M(C^-)$.
\end{lem}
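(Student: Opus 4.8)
The plan is to derive both statements from the case of finite strings, by realising $M(C)$ as a limit of string modules of finite strings. First I would observe that a stabilising string $C$ decomposes, as a concatenation of words, as $C=L\,F\,R$ with $F$ finite and $L,R$ \emph{pure rays}, i.e.\ half-infinite strings in which a single letter is repeated; this is immediate from the definition of ``stabilising''. The short exact sequences attached to the two concatenation points (recalled in the example) then express $M(C)$ as an iterated extension of the finite-length module $M(F)$ and of the two ray modules $M(L),M(R)$. A pure-ray module is computed by the recipe of the example: according to the orientation of the ray it is either a filtered colimit of finite-length modules --- an injective envelope of $k$ over $k\llbracket x\rrbracket$ or over $k\llbracket y\rrbracket$, hence artinian --- or an inverse limit of finite-length modules, namely $A/(y)\cong k\llbracket x\rrbracket$ or $A/(x)\cong k\llbracket y\rrbracket$, hence noetherian. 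Since the category of $A$-modules is locally noetherian and $E(A/\frm)$ is artinian ($A$ being noetherian, local and complete), Proposition~\ref{pr:art-noeth} applies: the artinian-by-noetherian modules form a Serre subcategory containing all artinian and all noetherian modules. Hence $M(C)$ is artinian-by-noetherian, and in particular Matlis reflexive by Proposition~\ref{pr:reflexive}.

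For the isomorphism $M(C)^\vee\cong M(C^-)$ the finite case is given, and the crucial point is that $(-)^\vee$ turns colimits into limits but not conversely; so the infinite ends of $C$ come in two kinds --- those whose arrows converge towards the finite part of $C$, and those whose arrows diverge --- and the passage $C\mapsto C^-$ interchanges them. I would first treat a stabilising $C$ with no diverging end: then $M(C)$ is the filtered colimit, along monomorphisms, of the modules $M(C^{(k)})$ of the finite truncations $C^{(k)}$, so $M(C)^\vee\cong\varprojlim_k M(C^{(k)})^\vee\cong\varprojlim_k M\bigl((C^{(k)})^-\bigr)\cong M(C^-)$, the last isomorphism because $C^-$ has no converging end and $M(C^-)$ is by construction the inverse limit of the $M((C^{(k)})^-)$ --- this is precisely the example's identification $M(C_\infty)^\vee\cong M(D_\infty)$ carried out in general. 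Next, if $C$ has no converging end, then $C^-$ has no diverging end, so by the previous case $M(C^-)^\vee\cong M(C)$; since $M(C^-)$ is Matlis reflexive by the first part of the lemma, $M(C)^\vee\cong\bigl(M(C^-)^\vee\bigr)^\vee\cong M(C^-)$. Finally, for an arbitrary stabilising $C$ I would write $M(C)=\varinjlim_n M(N_n)$ along monomorphisms, where $N_n$ is $C$ with its converging ends truncated at level $n$, so that $N_n$ has no converging end; then $M(C)^\vee\cong\varprojlim_n M(N_n)^\vee\cong\varprojlim_n M(N_n^-)$ by the case just settled, and since $N_n^-$ is $C^-$ with its diverging ends truncated at level $n$, this inverse limit is $M(C^-)$.

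The main obstacle is not conceptual but organisational: one must fix the sign conventions so that ``end with converging (resp.\ diverging) arrows'', ``direct (resp.\ inverse) limit of truncations'', the operation $C\mapsto C^-$, and the choice of which factor of a concatenation becomes a submodule all match up; and one must check that the isomorphism $M(F)^\vee\cong M(F^-)$ for finite $F$, as well as the evaluation isomorphism for reflexive modules, is natural enough to survive the (co)limit passages above --- a naturality that is already tacitly used when the example forms $\varprojlim M(C_i^-)$. It is also worth remarking that the presentation $M(C)=\varinjlim_n M(N_n)$ is one of the admissible choices in the choice-independent definition of $M(C)$, so no generality is lost.
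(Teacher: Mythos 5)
Your argument is correct and follows essentially the same route as the paper: decompose a stabilising string into a finite core and pure rays via the concatenation exact sequences, identify the ray modules as artinian (Pr\"ufer-type over $k\llbracket x\rrbracket$ or $k\llbracket y\rrbracket$) or noetherian ($A/(y)$ or $A/(x)$), and invoke the Serre-subcategory property from Proposition~\ref{pr:art-noeth}. For the duality statement the paper likewise reduces to the rays $C_a$, $D_a$ using exactness of $(-)^\vee$ and $(\varinjlim M_i)^\vee\cong\varprojlim M_i^\vee$; your truncation-by-truncation bookkeeping is a more explicit (and slightly more careful) version of that same reduction.
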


\begin{proof}
  Clearly, $M(C)$ is of finite length if and only if $C$ is
  finite.

Next consider the following type $\bbN$ strings, where $a=x$ or
$a=y$:
\[ \begin{tikzcd}[column sep=15pt]
     C_a\colon\;\;\circ&\circ\arrow[l,swap,"a"]& \circ\arrow[l,swap,"a"] & \arrow[l,swap,"a"] \cdots
   \end{tikzcd}
   \quad\text{and}\quad
 \begin{tikzcd}[column sep=15pt] 
   D_a\colon\;\;\circ\arrow[r,"a"]& \circ\arrow[r,"a"] & \circ\arrow[r,"a"] &\cdots
    \end{tikzcd}
  \]
The module $M(C_a)$ equals the image of
  $M(C_\infty)\xto{a}M(C_\infty)$. Analogously, $M(D_a)$ equals the
  image of $M(D_\infty)\xto{a}M(D_\infty)$. Thus $M(C_a)$ is artinian
  and $M(D_a)$ is noetherian.

  If $C$ is of type $\bbN$, then $C$ can be written as
  concatenation of two strings $C'$ and $C''$ such that
  $C'\in\{C_x,C_y,D_x,D_y\}$ and $C''$ is finite.
  This gives rise to an extension of the form
  \[0\lto M(C')\lto M(C)\lto M(C'')\lto 0 \] or 
  \[0\lto M(C'')\lto M(C)\lto M(C')\lto 0.\] It follows that $M(C)$ is
  artinian or noetherian, since $M(C')$ is artinian or
  noetherian by our previous observation.

  If $C$ is of type $\bbZ$, then one can write it as concatenation of
  two type $\bbN$ strings $C'$ and $C''$, connected by an arrow from
  $C''$ to $C'$. This yields an exact sequence
  \[0\lto M(C')\lto M(C)\lto M(C'')\lto 0 \] and therefore $M(C)$ is
  artinian-by-noetherian; see Proposition~\ref{pr:art-noeth}.

  With the arguments from the first part of the proof, the assertion
  about Matlis duality reduces to the case that $C$ is finite or of
  the form $C_a$ or $D_a$. Here, one uses the exactness of Matlis
  duality and that $(\varinjlim M_i)^\vee\cong \varprojlim M_i^\vee$.
\end{proof}

\subsection*{Acknowledgements} 
I am grateful to Bill Crawley-Boevey for much help, in particular with
the Krull--Schmidt property for Matlis reflexive modules. Also, I wish
to thank Wassilij Gnedin, Norihiro Hanihara, and the referees for
helpful comments on this work. Alberto Fernandez Boix kindly pointed
me to \cite{Zi1974}. This work was supported by the Deutsche
Forschungsgemeinschaft (SFB-TRR 358/1 2023 - 491392403).

\end{document}